\newtheorem{theo}{Theorem}
\newtheorem{lemm}{Lemma} 
\newtheorem{prop}{Proposition}
\newtheorem{coro}{Corollary}
\newcommand{\pqs}{\mbox{$(p,q)$-\textsc{scp}}}
\newcommand{\pqsp}[2]{\mbox{$(#1,#2)$-\textsc{scp}}}
\newcommand{\pqc}{\mbox{$(p,q)$-ary chain}}
\newcommand{\n}{\mathbb{N}}
\newcommand{\eps}{\varepsilon}
\tikzstyle{grid}=[dotted, very thin]
\tikzstyle{walk}=[dashed]
\definecolor{myblue}{rgb}{0,0.1,0.9}
\definecolor{mygreen}{rgb}{0, 0.65, 0.2}
\definecolor{myred}{rgb}{0.8,0,0.2}
\title[On $(P,Q)$-ary chain partitions with bounded parts]{On the maximal weight
  of $(P,Q)$-ary chain partitions with bounded parts}
\author{Filippo Disanto}
\address{Institut für Genetik, Universität zu Köln, Germany}
\email[F.~Disanto]{disafili@yahoo.it}
\author{Laurent Imbert}
\address{LIRMM, CNRS, Université Montpellier 2, France}
\email[L.~Imbert]{Laurent.Imbert@lirmm.fr}
\author{Fabrice Philippe} 
\address{LIRMM, CNRS, Université Montpellier 2, Université Montpellier 3, France}
\email[F.~Philippe]{Fabrice.Philippe@lirmm.fr}
\date{\today}
\begin{document}

\begin{abstract}
  A $(p,q)$-ary chain is a special type of chain partition of integers with
  parts of the form $p^aq^b$ for some fixed integers $p$ and $q$.  In this note,
  we are interested in the maximal weight of such partitions when their parts
  are distinct and cannot exceed a given bound $m$.  Characterizing the cases
  where the greedy choice fails, we prove that this maximal weight is, as a
  function of $m$, asymptotically independent of $\max(p,q)$, and we
  provide an efficient algorithm to compute it.
\end{abstract}

\maketitle

\section{Introduction}
\label{sec:introduction}

Let $p,q$ be two fixed integers, and let $E=\{p^aq^b:(a,b)\in\n^2\}$ be endowed
with the divisibility order, i.e., $x \succeq y \iff y \mid x$.  A \pqc\ is a
finite non-increasing sequence in $E$.  For example, $(72, 12, 4, 4,1)$ is a
$(2,3)$-ary chain, whereas $(72, 12, 4, 3, 1)$ is not since $4 \not\succeq 3$.
We define the \emph{weight} of a \pqc\ as the sum of its terms:
\begin{equation}
  \label{eq:weight}
  w = \sum_{i\geq 1} p^{a_i}q^{b_i}, \; \text{ where }
  p^{a_i}q^{b_i} \succeq p^{a_{i+1}}q^{b_{i+1}} \text{ for } i \geq 1
\end{equation}

Expansions of this type have been proposed and successfully used by
Dimitrov~et~al. in the context of digital signal processing and cryptography
under the name \emph{double-base number system}. (For more details
see~\cite{DimJulMil99,DimImbMis08:mathcomp} and the references therein.)

From a different point of view, a \pqc\ can be seen as a partition of its
weight, where the parts are restricted to the set $E$ and constrained by a
divisibility condition.  Surprisingly, works on integer partitions with
divisibility constraints on the parts are very scarce.  Erd\H{o}s and Loxton
considered two types of such unconventional partitions, called chain and
umbrella partitions~\cite{ErdLox79a}, and obtained ``\emph{some rather weak
  estimates for various partition functions}''.  More recently, motivated by
some theoretical questions behind Dimitrov's number system, the second and third
authors refined some of Erd\H{o}s and Loxton's earlier results in a paper
entitled \emph{strictly chained $(p,q)$-ary
  partitions}~\cite{ImbPhi10:pqary:CDM}.  A strictly chained $(p,q)$-ary
partition, or \pqs{} for short, is a decreasing \pqc , i.e. it has distinct
parts.  The original motivation for the present work was to extend the results
from~\cite{ImbPhi10:pqary:CDM} to the unconventional situation where the parts
of a \pqs{} can be either positive or negative. The results of such a study are
expected to provide significant improvements for some cryptographic primitives,
e.g. the computation of the multiple of a point on an elliptic curve.  In
this context the first, natural question that we tackle in the present paper is:
``What is the maximal weight of a \pqs{} whose parts are bounded by some given
integer $m$?''  Although the problem may seem elementary at first glance, we
show that the answer is not so trivial.  In particular, assuming $p<q$, we prove
that this maximal weight asymptotically grows as $pm/(p-1)$, independently of
$q$.

If the first part is given, the heaviest \pqs{} may be computed using a greedy
strategy by successively taking the next greatest part satisfying the
divisibility condition.  Nevertheless, given a bound $m > 0$ on the parts,
determining how to best select the first part is not immediate and the greedy
approach fails in general.  Indeed, we shall see that choosing the largest part
less than or equal to $m$ does not always provide a partition of maximal weight.
These facts are established in Sections~\ref{sec:preliminaries}
and~\ref{sec:maxim-elem} among other preliminary definitions, examples, and
results.  The cases where the greedy choice fails are fully characterized in
Section~\ref{sec:Ym}. Section~\ref{sec:asymptotics} is devoted to the asymptotic
behavior of the maximal weight as a function of $m$.  Finally, in
Section~\ref{sec:computing} we show how to compute a best choice for the first
part, thus the maximal weight, in $O(\log\log m)$ steps.

\section{Preliminaries}
\label{sec:preliminaries}

Let $m$ be a positive integer, and let $G(m)$ denote the maximal weight of a
\pqs{} whose greatest part does not exceed $m$.  For example, with $p=2$ and
$q=3$, the first values of $G$ are: 1, 3, 4, 7, 7, 10, 10, 15, 15, 15, 15, 22,
22, 22, 22, 31, 31, \dots

In the following, we shall assume w.l.o.g. that $p<q$.  Notice that the case
$p=1$ is irrelevant since $G(m)$ is simply the sum of all the powers of $q$ less
than or equal to $m$.  More generally, and for the same reason, we shall
consider that $p$ and $q$ are not powers of the same integer, or equivalently
\emph{multiplicatively independent}. As a direct consequence, both $\log_p{q}$
and $\log_q{p}$ are irrational numbers (see e.g.~\cite[Theorem~2.5.7]{AllSha03}).  Under
this assumption, the first values of $G(m)$ may be quickly computed with the
help of the following formula.
\begin{prop}
  \label{lemm:Gamma(k)}
  For $m \in \n^*$, let $G(m)$ denote the largest integer that can be expressed
  as a strictly chained $(p,q)$-ary partition with all parts less than or equal
  to $m$. Assume that $G(m) = 0$ if $m \not\in \n$. Then, we have $G(1)=1$, and
  for $m>1$
  \begin{equation}
    \label{eq:Gamma}
    G(m) = \max\left( G(m-1), 1+pG(m/p), 1+qG(m/q) \right)
  \end{equation}
\end{prop}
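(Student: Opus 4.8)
The plan is to prove the two inequalities $G(m)\ge\max(G(m-1),\,1+pG(m/p),\,1+qG(m/q))$ and the reverse separately; no induction is needed. I would first record two elementary facts used throughout. Since a \pqs{} with parts $\le m$ exists for every $m\ge 1$ (for instance $(1)$) and there are only finitely many such partitions, a maximum-weight one exists; and in any maximum-weight \pqs{} the smallest part must equal $1$, because otherwise appending a trailing $1$ yields a strictly heavier \pqs{} still having all parts $\le m$, a contradiction.

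For the lower bound I would exhibit, for each of the three quantities, a legal \pqs{} with parts $\le m$ attaining it. That $G(m)\ge G(m-1)$ is immediate. For $G(m)\ge 1+pG(m/p)$: if $m/p\notin\n$ the right-hand side equals $1$, which $G(m)$ trivially exceeds; if $m/p\in\n$, I would take a maximum-weight \pqs{} for the bound $m/p$, multiply every part by $p$ --- this preserves both the divisibility chain and the strict decrease, and lowers the largest part to at most $m$ --- and then append a $1$, which is legal since every part has become a multiple of $p$. The resulting \pqs{} has all parts $\le m$ and weight $1+pG(m/p)$. The term $1+qG(m/q)$ is handled in exactly the same way.

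For the upper bound I would start from a maximum-weight \pqs{} $(c_1,\dots,c_k)$ with parts $\le m$, so $c_1>\dots>c_k$, $c_{i+1}\mid c_i$ for all $i$, each $c_i\in E$, and $c_1\le m$, and I would split on the value of $c_1$. If $c_1\le m-1$, then $(c_1,\dots,c_k)$ is already a \pqs{} for the bound $m-1$, so $G(m)\le G(m-1)$. If $c_1=m$, then since $m>1$ the fact that the smallest part is $1$ forces $k\ge 2$ and $c_{k-1}>1$; as $c_{k-1}\in E$ and $c_{k-1}>1$, it is divisible by $p$ or by $q$. Suppose $p\mid c_{k-1}$. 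Since $c_{k-1}$ divides every earlier part of the chain, all of $c_1,\dots,c_{k-1}$ are divisible by $p$, and so $(c_1/p,\dots,c_{k-1}/p)$ is a \pqs{} with all parts $\le m/p$; hence $c_1+\dots+c_{k-1}=p\,(c_1/p+\dots+c_{k-1}/p)\le pG(m/p)$ and therefore $G(m)=c_1+\dots+c_k\le 1+pG(m/p)$. If instead $q\mid c_{k-1}$, the identical computation gives $G(m)\le 1+qG(m/q)$. In every case $G(m)$ is bounded by one of the three quantities, hence by their maximum; together with the lower bound this yields the claimed equality, and $G(1)=1$ is immediate from the definition.

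The step I expect to be the real obstacle is the upper bound when $c_1=m$. The tempting approach is to peel the largest part off the \emph{top} and bound the remaining chain using $G(c_2)\le G(m/p)$, but this loses too much and does not close the gap to $1+pG(m/p)$. The argument that works peels a part off the \emph{bottom} --- optimality guarantees that part is exactly $1$ --- and then rescales the rest by whichever of $p$ or $q$ divides its new smallest part $c_{k-1}$; the observation that $c_{k-1}$ divides all the larger parts of the chain, so that the rescaled sequence is still a genuine \pqs{}, is the crux.
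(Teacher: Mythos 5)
Your proof is correct and follows essentially the same route as the paper's: the key step in both is that an optimal \pqs{} must contain the part $1$, and removing it leaves a chain whose parts are all divisible by $p$ or by $q$, which after rescaling is bounded by $m/p$ or $m/q$. You simply make explicit the lower-bound constructions and the case split on the largest part that the paper's terser proof leaves implicit.
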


\begin{proof}
  Let $\lambda$ be a partition of weight $G(m)$ whose parts are all less than or
  equal to $m$.  First, notice that $\lambda$ must contain part $1$ by
  definition of $G(m)$.  If $m \not\in E$, then $G(m) = G(m-1)$.  Otherwise, it
  suffices to observe that removing part $1$ from $\lambda$ produces a partition
  whose parts are all divisible by either $p$ or $q$.
\end{proof}

Computing $G(m)$ with relation~\eqref{eq:Gamma} requires $O(\log m)$ steps in
the worst case: Simply note that, for all $m$, in at most $p-1$ baby-steps,
i.e. $G(m) = G(m-1)$, one gets an integer that is divisible by $p$.
Formula~\eqref{eq:Gamma} may also be adapted to compute both $G(m)$ and a
  \pqs{} of such weight.  Nevertheless, it does not give any idea about the
asymptotic behavior of $G$.  Moreover, we shall see in
Section~\ref{sec:computing} how to compute $G(m)$ and a \pqs{} of weight $G(m)$
in $O(\log\log m)$ steps.

A natural graphic representation for \pqs{}s is obtained by mapping each part
$p^aq^b \in E$ to the pair $(a,b) \in \n^2$.  Indeed, with the above assumptions
on $p$ and $q$, the mapping $(a,b) \mapsto p^aq^b$ is one-to-one.  Since the
parts of a \pqs{} are pairwise distinct by definition, this graphic
representation takes the form of an increasing path in $\n^2$ endowed with the
usual product order.  This is illustrated in Figure~\ref{fig:staircase-walk-72}
with the ten \pqsp{2}{3}s containing exactly six parts and whose greatest part
equals $72 = 2^33^2$.  Note that a \pqs{} with largest part $p^aq^b$ possesses at
most $a+b+1$ parts, and that there are exactly $\binom{a+b}{b}$ of them with a
maximum number of parts.
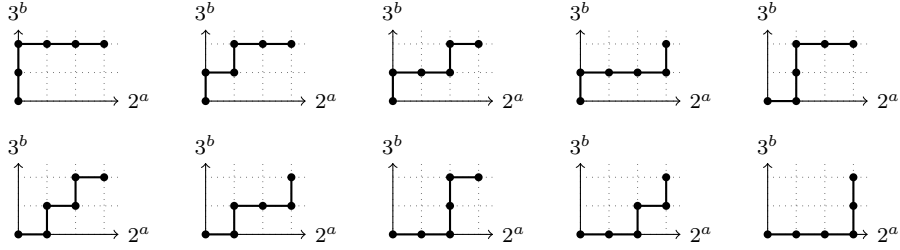
\begin{figure}[h]
  \centering
  \begin{tabular}{ccccc}
    \begin{tikzpicture}[scale=0.38, every text node part/.style={font=\footnotesize}]
      \draw[grid] (0,0) grid (3.5, 2.5);
      \draw[->] (0,0) -- (3.5,0);
      \draw[->] (0,0) -- (0,2.5);
      \draw (3.5,0) node[anchor=west] {$2^a$};
      \draw (0,2.5) node[anchor=south] {$3^b$};
      \draw[thick] (0,0) |- (3,2);
      \foreach \a/\b in {0/0, 0/1, 0/2, 1/2, 2/2, 3/2}
      \fill (\a, \b) circle (4pt);
    \end{tikzpicture}
    &
    \begin{tikzpicture}[scale=0.38, every text node part/.style={font=\footnotesize}]
      \draw[grid] (0,0) grid (3.5, 2.5);
      \draw[->] (0,0) -- (3.5,0);
      \draw[->] (0,0) -- (0,2.5);
      \draw (3.5,0) node[anchor=west] {$2^a$};
      \draw (0,2.5) node[anchor=south] {$3^b$};
      \draw[thick] (0,0) |- (1,1) |- (3,2);
      \foreach \a/\b in {0/0, 0/1, 1/1, 1/2, 2/2, 3/2}
      \fill (\a, \b) circle (4pt);
    \end{tikzpicture}
   & 
    \begin{tikzpicture}[scale=0.38, every text node part/.style={font=\footnotesize}]
      \draw[grid] (0,0) grid (3.5, 2.5);
      \draw[->] (0,0) -- (3.5,0);
      \draw[->] (0,0) -- (0,2.5);
      \draw (3.5,0) node[anchor=west] {$2^a$};
      \draw (0,2.5) node[anchor=south] {$3^b$};
      \draw[thick] (0,0) |- (2,1) |- (3,2);
      \foreach \a/\b in {0/0, 0/1, 1/1, 2/1, 2/2, 3/2}
      \fill (\a, \b) circle (4pt);
    \end{tikzpicture}
    &
    \begin{tikzpicture}[scale=0.38, every text node part/.style={font=\footnotesize}]
      \draw[grid] (0,0) grid (3.5, 2.5);
      \draw[->] (0,0) -- (3.5,0);
      \draw[->] (0,0) -- (0,2.5);
      \draw (3.5,0) node[anchor=west] {$2^a$};
      \draw (0,2.5) node[anchor=south] {$3^b$};
      \draw[thick] (0,0) |- (3,1) |- (3,2);
      \foreach \a/\b in {0/0, 0/1, 1/1, 2/1, 3/1, 3/2}
      \fill (\a, \b) circle (4pt);
    \end{tikzpicture}
    &
    \begin{tikzpicture}[scale=0.38, every text node part/.style={font=\footnotesize}]
      \draw[grid] (0,0) grid (3.5, 2.5);
      \draw[->] (0,0) -- (3.5,0);
      \draw[->] (0,0) -- (0,2.5);
      \draw (3.5,0) node[anchor=west] {$2^a$};
      \draw (0,2.5) node[anchor=south] {$3^b$};
      \draw[thick] (0,0) -| (1,2) -| (3,2);
      \foreach \a/\b in {0/0, 1/0, 1/1, 1/2, 2/2, 3/2}
      \fill (\a, \b) circle (4pt);
    \end{tikzpicture}
    \\
    \begin{tikzpicture}[scale=0.38, every text node part/.style={font=\footnotesize}]
      \draw[grid] (0,0) grid (3.5, 2.5);
      \draw[->] (0,0) -- (3.5,0);
      \draw[->] (0,0) -- (0,2.5);
      \draw (3.5,0) node[anchor=west] {$2^a$};
      \draw (0,2.5) node[anchor=south] {$3^b$};
      \draw[thick] (0,0) -| (1,1) -| (2,2) -| (3,2);
      \foreach \a/\b in {0/0, 1/0, 1/1, 2/1, 2/2, 3/2}
      \fill (\a, \b) circle (4pt);
    \end{tikzpicture}
    &
    \begin{tikzpicture}[scale=0.38, every text node part/.style={font=\footnotesize}]
      \draw[grid] (0,0) grid (3.5, 2.5);
      \draw[->] (0,0) -- (3.5,0);
      \draw[->] (0,0) -- (0,2.5);
      \draw (3.5,0) node[anchor=west] {$2^a$};
      \draw (0,2.5) node[anchor=south] {$3^b$};
      \draw[thick] (0,0) -| (1,1) -| (3,2);
      \foreach \a/\b in {0/0, 1/0, 1/1, 2/1, 3/1, 3/2}
      \fill (\a, \b) circle (4pt);
    \end{tikzpicture}
    &
    \begin{tikzpicture}[scale=0.38, every text node part/.style={font=\footnotesize}]
      \draw[grid] (0,0) grid (3.5, 2.5);
      \draw[->] (0,0) -- (3.5,0);
      \draw[->] (0,0) -- (0,2.5);
      \draw (3.5,0) node[anchor=west] {$2^a$};
      \draw (0,2.5) node[anchor=south] {$3^b$};
      \draw[thick] (0,0) -| (2,2) -| (3,2);
      \foreach \a/\b in {0/0, 1/0, 2/0, 2/1, 2/2, 3/2}
      \fill (\a, \b) circle (4pt);
    \end{tikzpicture}
    &
    \begin{tikzpicture}[scale=0.38, every text node part/.style={font=\footnotesize}]
      \draw[grid] (0,0) grid (3.5, 2.5);
      \draw[->] (0,0) -- (3.5,0);
      \draw[->] (0,0) -- (0,2.5);
      \draw (3.5,0) node[anchor=west] {$2^a$};
      \draw (0,2.5) node[anchor=south] {$3^b$};
      \draw[thick] (0,0) -| (2,1) -| (3,2);
      \foreach \a/\b in {0/0, 1/0, 2/0, 2/1, 3/1, 3/2}
      \fill (\a, \b) circle (4pt);
    \end{tikzpicture}
    &
    \begin{tikzpicture}[scale=0.38, every text node part/.style={font=\footnotesize}]
      \draw[grid] (0,0) grid (3.5, 2.5);
      \draw[->] (0,0) -- (3.5,0);
      \draw[->] (0,0) -- (0,2.5);
      \draw (3.5,0) node[anchor=west] {$2^a$};
      \draw (0,2.5) node[anchor=south] {$3^b$};
      \draw[thick] (0,0) -| (3,2);
      \foreach \a/\b in {0/0, 1/0, 2/0, 3/0, 3/1, 3/2}
      \fill (\a, \b) circle (4pt);
    \end{tikzpicture}
    \\
  \end{tabular}
  \caption{The set of \pqsp{2}{3}s with 6 parts and whose largest part equals
    $2^33^2 = 72$.}
  \label{fig:staircase-walk-72}
\end{figure}

With this representation in mind, one is easily convinced that the heaviest
\pqs{} with first part $p^aq^b$ looks like the top left \pqs{} in
Figure~\ref{fig:staircase-walk-72}.  This is formalized in the following lemma.

\begin{lemm}
  \label{lemm:characterize}
  Given $a,b\in\n$, the heaviest \pqs{} with first part $p^aq^b$ is the one whose
  parts are the elements of the set $\{q^i : 0 \leq i<b\} \cup \{q^bp^i : 0 \leq
  i\leq a\}$.
\end{lemm}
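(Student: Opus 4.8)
The plan is to recast the problem through the graphic representation introduced above. Since $p$ and $q$ are multiplicatively independent, $(a,b)\mapsto p^aq^b$ is a bijection, so a \pqs{} with first part $p^aq^b$ is precisely a finite chain $S\subseteq\n^2$ (for the product order) whose maximum is $(a,b)$, with weight $\sum_{(i,j)\in S}p^iq^j$. Every such chain lies in the box $B=\{0,\dots,a\}\times\{0,\dots,b\}$, and the \pqs{} claimed to be optimal corresponds to the ``staircase'' chain $S^\star=\{(0,j):0\le j\le b\}\cup\{(i,b):0\le i\le a\}$, that is, the monotone lattice path taking $b$ up-steps followed by $a$ right-steps.

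First I would reduce to chains that cannot be extended inside $B$. If $S$ is the chain of a heaviest \pqs{} with first part $p^aq^b$ and some $(i,j)\in B\setminus S$ is such that $S\cup\{(i,j)\}$ is still a chain, then $S\cup\{(i,j)\}$ still has maximum $(a,b)$, hence is again a valid \pqs{} with first part $p^aq^b$, of weight larger by $p^iq^j>0$ --- impossible. So $S$ is a non-extendable chain of $B$; the standard insertion argument (between consecutive elements $(i,j)\prec(i',j')$ one cannot have both $i<i'$ and $j<j'$, else $(i+1,j)$ fits strictly between them, and the coordinate that does change must change by one unit) shows that $S$ is a monotone unit-step lattice path from $(0,0)$ to $(a,b)$, so $|S|=a+b+1$.

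Then I would run an exchange argument on these paths. Suppose a path visits $(i,j)$, then $(i+1,j)$, then $(i+1,j+1)$, i.e. it takes a right-step immediately followed by an up-step; swapping these two steps gives another monotone path from $(0,0)$ to $(a,b)$, with the same vertices except that $(i+1,j)$ is replaced by $(i,j+1)$, so its weight changes by
\[
p^iq^{j+1}-p^{i+1}q^j=p^iq^j\,(q-p)>0 ,
\]
where we use $p<q$. Each such swap strictly raises the weight while strictly decreasing the number of ``right-before-up'' pairs in the step word, so after finitely many swaps we reach the unique path avoiding that pattern, namely $S^\star$. Hence $S^\star$ is the unique heaviest chain with maximum $(a,b)$; translating back to parts gives $\{q^i:0\le i<b\}\cup\{q^bp^i:0\le i\le a\}$ (the vertex $(0,b)$ being counted in the second family), which is the claim.

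The step that needs the most care is the exchange argument: one must check both that each swap strictly increases the weight --- immediate from $p<q$ --- and that the rearrangement process terminates, which is the usual inversion-count (bubble-sort) bookkeeping. The reduction to non-extendable chains is routine once one notes that adjoining any lattice point below $(a,b)$ can only increase the weight.
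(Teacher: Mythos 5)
Your proof is correct, but it takes a genuinely different route from the paper's. The paper argues in one shot on an arbitrary chain: each part $p^{a_i}q^{b_i}$ of a \pqs{} with first part $p^aq^b$ is replaced by the staircase element with the same exponent sum, namely $q^{a_i+b_i}$ if $a_i+b_i\leq b$ and $p^{a_i+b_i-b}q^b$ otherwise; since the exponent sums strictly decrease along a \pqs{}, the projected sequence is again a \pqs{} with the same first part, each part weakly increases because $p<q$, and equality throughout forces the original parts to lie already in the staircase set, so the maximum is attained exactly by taking the whole set. You instead first reduce to non-extendable chains, identify them with unit-step monotone lattice paths from $(0,0)$ to $(a,b)$, and then sort the step word by adjacent right--up exchanges, each raising the weight by $p^iq^j(q-p)>0$, with an inversion count ensuring termination. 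Both arguments are complete and both yield uniqueness; the paper's projection is shorter because it needs neither the saturation step nor the termination bookkeeping, while your exchange argument is more local and makes the role of $p<q$ visible at each step. The only points in your write-up that merit an explicit word are the ones you flag yourself: that a non-extendable chain must contain $(0,0)$ (otherwise adjoin it), and that each adjacent swap lowers the number of right-before-up pairs by exactly one.
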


\begin{proof}
  Consider a \pqs{} $\lambda=(\lambda_i)_{i=1}^k$ with greatest part
  $\lambda_1=p^aq^b$.  Let $\lambda_i=p^{a_i}q^{b_i}$.  If $a_i+b_i>b$ then
  define $\lambda'_i=p^{a_i+b_i-b}q^b$, otherwise let $\lambda'_i=q^{a_i+b_i}$.
  Note that $\lambda'_1=p^aq^b$ again.  Since sequence $(a_i+b_i)_{i=1}^k$ is
  decreasing, $(\lambda'_i)_{i=1}^k$ is also a \pqs.  Since $p<q$ we have
  $\lambda'_i\geq\lambda_i$ for all $i$, with equality if, and only if, the
  parts in $\lambda$ form a subset of $\{q^i : 0 \leq i<b\} \cup \{p^iq^b : 0
  \leq i\leq a\}$.  Therefore, the maximal weight is reached when taking the
  whole set, and only in this case.
\end{proof}

As a consequence, a \pqs{} of weight $G(m)$ and whose parts do not exceed $m$ is
characterized by its greatest part only.  Moreover, denoting by $p^aq^b$ this
greatest part, we have $G(m)=h(a,b)$, where $h$ is the mapping defined on $\n^2$
by
\begin{align}\label{def:h}
  h(a,b)=\frac{q^b-1}{q-1}+\frac{p^{a+1}-1}{p-1}q^b
\end{align}
Accordingly, the definition of $G$ may be rewritten as 
\begin{equation}
  \label{def:G}
  G(m) = \max_{P_m} h,\quad \text{where } P_m=\{(a,b)\in\n^2 : p^aq^b \leq m\}
\end{equation}

Finally observe that the greatest part of a \pqs{} of weight $G(m)$ and whose
parts do not exceed $m$ must be a maximal element of $E \cap [0,m]$ for the
divisibility order.  Otherwise, the partition could be augmented by a part,
resulting in a partition of larger weight.  The next section is devoted to the
set of these maximal elements.
 
\section{On the set $Z_m$}
\label{sec:maxim-elem}

For convenience, let us denote by $\rho$ the logarithmic ratio of $q$ and $p$.
$$
\rho = \frac{\log q}{\log p} > 1
$$
Since $p$ and $q$ are multiplicatively independent, $\rho$ is irrational.

Let us further denote by $Z_m$ the set of all maximal elements in $E \cap [0,m]$
for the divisibility order.  Recall that $E=\{p^aq^b:(a,b)\in\n^2\}$, so that any
element of $E$ may also be written as $p^{a+b\rho}$.  There are exactly
$\lfloor\log_q m\rfloor +1$ elements in $Z_m$, described in the following Lemma.

\begin{lemm}
  \label{lemm:Z_m}
  Let $m$ be a positive integer. The following characterization holds:
  $$
  p^aq^b\in Z_m \Longleftrightarrow 0\leq b\leq \lfloor\log_q
  m\rfloor\text{\ and\ }a=\lfloor\log_p m-b\rho\rfloor
  $$
\end{lemm}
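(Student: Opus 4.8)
\emph{Proof proposal.} The plan is to translate the order-theoretic condition ``maximal element of $E\cap[0,m]$ for divisibility'' into a pair of elementary inequalities and then take logarithms in base $p$. Recall from Section~\ref{sec:preliminaries} that $(a,b)\mapsto p^aq^b$ is a bijection from $\n^2$ onto $E$ carrying the product order to the divisibility order. Hence $p^aq^b\in E\cap[0,m]$ is maximal if and only if it lies in $[0,m]$ while its two immediate successors $(a+1,b)$ and $(a,b+1)$ do not, i.e.
$$
p^aq^b\leq m,\qquad p^{a+1}q^b>m,\qquad p^aq^{b+1}>m .
$$
Indeed, any $(a',b')$ strictly above $(a,b)$ in the product order dominates $(a+1,b)$ or $(a,b+1)$, so the corresponding element of $E$ exceeds $m$; conversely a maximal element must satisfy all three. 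Writing $q=p^\rho$ and applying $\log_p$, the three conditions become $a+b\rho\leq\log_p m$, $\;a>\log_p m-b\rho-1$, and $\;a>\log_p m-b\rho-\rho$ respectively.

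First I would observe that the first two of these say exactly that $a$ is the unique integer in the half-open interval $(\log_p m-b\rho-1,\ \log_p m-b\rho]$, that is, $a=\lfloor\log_p m-b\rho\rfloor$; this uses only that $\lfloor x\rfloor$ is the unique integer in $(x-1,x]$, so nothing special happens when $\log_p m-b\rho$ is itself an integer (e.g.\ when $m\in E$). Next I would note that, once $a=\lfloor\log_p m-b\rho\rfloor$ is fixed, the third inequality is free: from $a>\log_p m-b\rho-1$ and $\rho>1$ we get $a>\log_p m-b\rho-\rho$. This is the one asymmetric point in the argument, and it is precisely what lets the statement determine $a$ from $b$ rather than the reverse.

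It then remains to identify the admissible range of $b$. Since $a\in\n$ we need $\lfloor\log_p m-b\rho\rfloor\geq0$, equivalently $\log_p m-b\rho\geq0$, equivalently $b\log q\leq\log m$, i.e.\ $b\leq\log_q m$; as $b$ is a nonnegative integer this is $0\leq b\leq\lfloor\log_q m\rfloor$. Conversely, for any such $b$ the integer $a=\lfloor\log_p m-b\rho\rfloor$ is $\geq0$ and, by the previous paragraph, satisfies the three inequalities, hence $p^aq^b\in Z_m$. Combining the two implications yields the stated equivalence; as a by-product, each admissible $b$ contributes exactly one element of $Z_m$, which also recovers the count $\lfloor\log_q m\rfloor+1$ mentioned above.

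I do not anticipate a real obstacle. The only points needing care are the reduction of ``maximal'' to the two covering inequalities (justified by the product-order picture of Section~\ref{sec:preliminaries}) and the passage from the $b$-bound to nonnegativity of $a$; note that the irrationality of $\rho$ is not used in this lemma.
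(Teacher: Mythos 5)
Your proof is correct and takes essentially the same route as the paper's: maximality is reduced to $p^aq^b\le m<p^{a+1}q^b$ together with $m<p^aq^{b+1}$, the second condition is discarded because $p<q$ (your $\rho>1$ step), and the remaining pair of inequalities is solved for $a$ as a floor, with the range of $b$ coming from $a\ge 0$. The paper states exactly these steps, only more tersely.
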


\begin{proof}
  An element $p^aq^b$ of $E$ is in $Z_m$ if, and only if, $a$ and $b$ are
  non-negative, $p^aq^b\leq m< p^{a+1}q^b$, and $p^aq^b\leq m< p^{a}q^{b+1}$.
  Since $p<q$, the latter condition is superfluous.  Checking that the former
  inequalities are equivalent to the Lemma's claim is immediate.
\end{proof}

As a consequence, let us note for further use that 
\begin{align}
  Z_{qm} &= qZ_m\cup\{p^{\lfloor \rho + \log_pm\rfloor}\}, \label{eq:Z_qm}\\[1ex]
  Z_{pm} &= 
  \begin{cases}
    pZ_m & \text{if } \lfloor1/\rho+\log_qm\rfloor=\lfloor\log_qm\rfloor ,\\
    pZ_m\cup\{q^{\lfloor\log_qm\rfloor+1}\} & \text{otherwise}
  \end{cases}
\end{align}

The elements of $Z_m$ correspond exactly to the maximal integer points below or
on the line of equation $a\log{p} + b\log{q} - \log{m} = 0$.  An example is
given in Figure~\ref{fig:Z_750}. The corresponding values $p^aq^b$ and $h(a,b)$
are reported in Table~\ref{tab:750}.
\begin{figure}[htbp]
  \centering
  \begin{tikzpicture}[scale=0.65]
    \draw[dotted, very thin] (0,0) grid (13.5,6.5);
    \draw[->, very thin] (0,-0.3) -- (0,6.8) node[anchor=east] {$b$};
    \draw[->, very thin] (-0.3,0) -- (14,0) node[anchor=north] {$a$};
    \draw (-0.3,6.215) -- (10.027,-0.3);
    \foreach \y/\x in {0/9, 1/7, 2/6, 3/4, 4/3, 5/1, 6/0}
    \fill[red] (\x,\y) circle (2pt);
    \draw[dashed, blue] (0,0) -- (0,1) -- (2,2) -- (2,6.5);
    \foreach \y/\x in {0/0, 1/0, 2/2, 3/2, 4/2, 5/2, 6/2}
    {
      \draw[blue, thick] (\x - 0.1, \y - 0.1) -- (\x + 0.1, \y + 0.1);
      \draw[blue, thick] (\x - 0.1, \y + 0.1) -- (\x + 0.1, \y - 0.1);
    }
  \end{tikzpicture}
  \caption{The set $Z_{750}$ for $(p,q)=(2,3)$, represented as all maximal
    integer points below the line of equation $x\log{2} + y\log{3} - \log{750} =
    0$. The points along the dashed line correspond to the first values of the
    sequence $\ell$ defined in Theorem~\ref{th:ell}.}
  \label{fig:Z_750}
\end{figure}
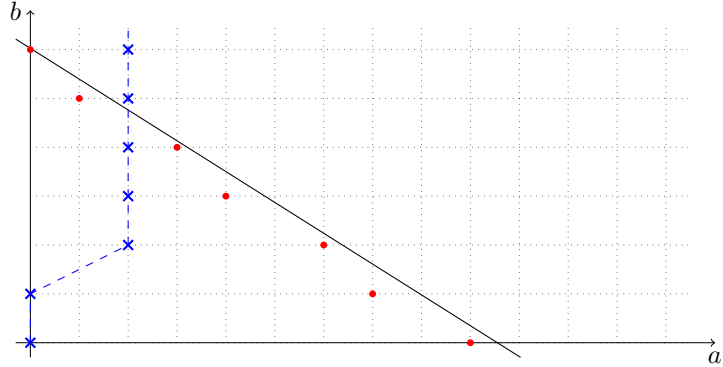

\begin{table}[htbp]
  \centering 
  \caption{The elements of $Z_{750}$ for $(p,q)=(2,3)$, together with the
    corresponding values $p^aq^b$ and $h(a,b)$. Note that $G(750) = h(3,4) =
    1255$.}
  \label{tab:750}
  \begin{tabular}{p{1.25cm}*7{c}}
    $(a,b)$ & (0, 6) & (1, 5) & (3, 4) & (4, 3) & (6, 2) & (7, 1) & (9, 0) \\
    \midrule
    $p^aq^b$ & 729 & 486 & 648 & 432 & 576 & 384 & 512 \\
    $h(a,b)$ & 1093 & 850 & 1255 & 850 & 1147 & 766 & 1023\\
  \end{tabular}
\end{table}

Further define $z_m$ as the greatest integer of the form $p^aq^b$ less than or
equal to $m$, that is,
$$
z_m = \max Z_m 
$$ 
Since $q^{\lfloor \log_q{m}\rfloor}\in Z_m$, we have $z_m \rightarrow \infty$
when $m \rightarrow \infty$.  The next proposition goes one step further.
\begin{prop}
  \label{lemm:function_z}
  We have the following: $z_m \sim m$ when $m \rightarrow \infty$.
 \end{prop}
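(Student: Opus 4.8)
The plan is to show that $z_m/m \to 1$, or equivalently that the "gap ratio" $m/z_m$ between $m$ and the largest $(p,q)$-number below it tends to $1$. The natural strategy is to exploit the density of $\{a\log p + b\log q : (a,b)\in\n^2\}$ in the positive reals, which follows from the irrationality of $\rho = \log q/\log p$. Concretely, fix $\eps > 0$; I want to produce, for every sufficiently large $m$, an element $p^aq^b \in E$ with $(1-\eps)m \le p^aq^b \le m$, which immediately gives $1-\eps \le z_m/m \le 1$.

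First I would translate everything to logarithms: writing $t = \log m$, I am looking for a pair $(a,b)\in\n^2$ with $a\log p + b\log q \in (t - \delta, t]$ where $\delta = -\log(1-\eps)$. Since $p$ and $q$ are multiplicatively independent, $\log_p q = \rho$ is irrational, so by the classical equidistribution/density argument (Weyl, or just the pigeonhole argument for $\{b\rho\}$) the set $\{a + b\rho : (a,b)\in\n^2\}$ is dense in $[0,\infty)$; scaling by $\log p$, the set $\{a\log p + b\log q\}$ is dense in $[0,\infty)$ as well. Hence there exist $a_0,b_0\in\n$ with $a_0\log p + b_0\log q \in (0,\delta)$; call $\eta = a_0\log p + b_0\log q$. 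Then the arithmetic-progression-like set $\{k\eta : k\in\n\} = \{k a_0\log p + k b_0 \log q\}$ has consecutive gaps $\eta < \delta$, so for every $t$ large enough (large enough that the relevant multiple of $(a_0,b_0)$ keeps both coordinates nonnegative, which is automatic) there is a $k$ with $k\eta \in (t-\delta, t]$. Taking $(a,b) = (k a_0, k b_0)$ gives the desired element $p^aq^b \le m$ with $p^aq^b \ge m(1-\eps)$, so $z_m \ge p^aq^b \ge (1-\eps)m$. Combined with the trivial $z_m \le m$, this yields $z_m \sim m$.

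Alternatively, and perhaps more cleanly for the write-up, I could avoid reproving density from scratch: the excerpt already invokes the irrationality of $\rho$, and one can cite the standard three-distance / equidistribution fact that $\{b\rho \bmod 1 : b\in\n\}$ is dense in $[0,1)$. From this, for any target $t$ one picks $b = \lfloor \log_q m \rfloor$ (so $q^b \le m < q^{b+1}$) and then chooses to "fill in" with powers of $p$; but the sharper route is to note $z_m \ge \max(q^{\lfloor\log_q m\rfloor}, \text{the }Z_m\text{-element } p^{\lfloor\log_p m - b\rho\rfloor}q^b)$ for a well-chosen $b$ making $\log_p m - b\rho$ close to an integer from above. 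Either phrasing works; the first (density of $\{k\eta\}$) is the least technical.

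**The main obstacle** is purely one of rigor in the density step: making sure that the approximating pair $(a,b)$ has \emph{nonnegative} integer coordinates, not merely integer coordinates — this is why I insist on finding $(a_0,b_0)\in\n^2$ with $0 < a_0\log p + b_0\log q < \delta$ (a positive combination), rather than using a difference of two powers which could force a negative exponent. Once that positivity is secured, taking positive multiples $k(a_0,b_0)$ keeps us inside $\n^2$ for free, and the rest is a one-line gap argument. There is no genuine number-theoretic difficulty beyond the already-cited irrationality of $\rho$; the proposition is essentially a corollary of density of $E$ "in the multiplicative scale."
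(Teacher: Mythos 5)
Your central density claim is false, and it fails exactly at the point you flag as ``the main obstacle.'' Every nonzero element of $\{a\log p+b\log q:(a,b)\in\n^2\}$ is at least $\log p\geq\log 2$, since both coefficients are nonnegative integers and at least one is positive. Hence for small $\eps$ (so that $\delta=-\log(1-\eps)<\log 2$) there is \emph{no} pair $(a_0,b_0)\in\n^2$ with $a_0\log p+b_0\log q\in(0,\delta)$, and the set $\{a\log p+b\log q\}$ is not dense in $[0,\infty)$ (it is finite in every bounded interval). What irrationality of $\rho$ gives you is density of the fractional parts $\{b\rho\}$ modulo $1$ --- i.e.\ smallness of $a+b\rho$ with $a$ allowed to be a \emph{negative} integer --- which is precisely the thing you correctly identified as unusable. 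So the ``take multiples of a small positive combination $\eta$'' argument collapses; the gaps of $E$ on the logarithmic scale tend to $0$ only at infinity, not near the origin, and that is what must be proved. The workable elementary route is the one you only gesture at in your alternative phrasing: fix $\delta'>0$, choose $B$ so large that $\{\,\{b\rho\}:0\leq b\leq B\,\}$ is $\delta'$-dense in $[0,1)$ (equidistribution, or the three-distance theorem); then for every $m$ with $\lfloor\log_q m\rfloor\geq B$ pick $b\leq B$ with $\{\log_p m-b\rho\}<\delta'$ and set $a=\lfloor\log_p m-b\rho\rfloor\geq 0$, giving $p^aq^b\leq m$ and $\log_p\bigl(m/p^aq^b\bigr)<\delta'$, hence $z_m\geq mp^{-\delta'}$. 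The uniformity in $m$ (a single finite $B$ works for all large $m$) is the step your write-up never supplies.

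For comparison, the paper takes a different and shorter route: it introduces $\hat z_m$, the smallest element of $E$ that is $\geq m$, and invokes Tijdeman's theorem on gaps between consecutive integers of the form $p^aq^b$, yielding the quantitative bound $0\leq m-z_m<z_m/(\log z_m)^C$. That quantitative control is not a luxury here: the same Tijdeman gap estimate is reused in the proof of Theorem~\ref{theo:Gm/m}, where a mere $z_m\sim m$ would not suffice. A corrected equidistribution argument along the lines above would prove this proposition, and would be more self-contained, but it would not provide the effective bound the paper relies on later.
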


\begin{proof}
  Let $\hat{z}_m$ be the smallest integer of the form $p^aq^b$ greater than or
  equal to $m$. Thus we have $z_m \leq m \leq \hat{z}_m$. By a theorem of
  Tijdeman~\cite{Tijdeman74a} we know that, for $m$ large enough, there exists a
  constant $C>0$ such that
  $$ \hat{z}_m - z_m < \frac{z_m}{(\log z_m)^C}, $$
  so that $0 \leq m - z_m < z_m / (\log z_m)^C $.
\end{proof}

From a greedy point of view, one might think that choosing $z_m$ for the largest
part of the \pqs{} formed as in Lemma~\ref{lemm:characterize} yields a \pqs{} of
weight $G(m)$; in which case our asymptotics problem would be solved using the
above proposition.  Unfortunately, this is not true.  In Table~\ref{tab:750}, we
see for instance that $z_{750}=729$, obtained for $(a,b) = (0,6)$, does not give
a \pqs{} of maximal weight.  Instead, the maximal weight $G(750) = 1255$ is
obtained for $(a,b) = (3,4)$.  Hence, even if $m$ is of the form $p^aq^b$, the
first part of a \pqs{} of maximal weight may be different from $m$.  For
instance $G(729) = 1255$ comes from a unique \pqs{} whose first part is $648$.
In the next Section, we study the subset $Y_m$ of $Z_m$, which yields the
\pqs{}s of maximal weight $G(m)$.

\section{On the set $Y_m$}\label{sec:Ym}
\label{sec:Ym}

According to~\eqref{def:G}, $G(m)$ is equal to $h(a,b)$ for some values $a,b$.
First, notice that these values are not necessarily unique with respect to this
property, because $h$ is not necessarily one-to-one.  For example, with $(p,q) =
(2,3)$, observe from Table~\ref{tab:750} that $h(1,5)=h(4,3)=850$.  Similarly,
with $(p,q)=(2,5)$ one has $h(0,2)=h(4,0)=31$.

Let $Y_m$ be the set of all elements $p^aq^b$ in $E\cap[0,m]$ such that
$h(a,b)=G(m)$.  As already noticed, $Y_m$ is a subset of $Z_m$.  Next recall
that $z_m=\max Z_m$ needs not be in $Y_m$.  A particular relation between $Y_m$
and $z_m$ does however exist as proved in the next proposition.

\begin{prop}
  \label{prop:droite}
  For $m \in \n^*$, let $z_m=p^{a}q^{b}$. 
  Then
  \begin{equation}
    \label{eq:Ym}
    Y_m \subset \{ p^iq^j \in Z_m : j\leq b \}
  \end{equation}
\end{prop}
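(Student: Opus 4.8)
The statement asserts that if $z_m = p^a q^b$ is the greatest element of $E \cap [0,m]$, then no element $p^i q^j \in Z_m$ with $j > b$ can be the top part of a maximal-weight \pqs{}. The natural strategy is a contradiction argument: suppose $p^i q^j \in Y_m$ with $j > b$, and show that there is then another element of $Z_m$ with strictly greater $h$-value, contradicting $h(i,j) = G(m)$.

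First I would record the key geometric fact coming from Lemma~\ref{lemm:Z_m}: the map $b \mapsto (a(b), b)$ sending a fiber index $b$ to its point in $Z_m$ has $a(b) = \lfloor \log_p m - b\rho \rfloor$ strictly decreasing in $b$ (because $\rho > 1$), and moreover consecutive differences $a(b) - a(b+1)$ are either $\lceil \rho \rceil$ or $\lfloor \rho \rfloor$. Since $z_m = p^a q^b = \max Z_m$, the defining maximality means that for every other $(i,j) \in Z_m$ we have $p^i q^j \le p^a q^b$, i.e. $i + j\rho \le a + b\rho$; combined with $j > b$ this forces $i < a$ (strictly), and in fact $i \le a - \lceil (j-b)\rho \rceil$ roughly. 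The plan is to compare $h(i,j)$ against $h$ evaluated at a point of $Z_m$ on a \emph{lower} fiber — the most natural candidate being $z_m = (a,b)$ itself, or the point $(a', b+1)$ if $j > b+1$, working one fiber at a time.

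The computational heart is an inequality of the form: if $(i,j), (i',j') \in Z_m$ with $j' < j$ and $p^{i'}q^{j'} \ge$ (something close to) $p^i q^j$, then $h(i',j') > h(i,j)$. Using the closed form~\eqref{def:h}, $h(a,b) = \frac{q^b - 1}{q-1} + \frac{p^{a+1}-1}{p-1} q^b$, one sees the dominant term is $\frac{p}{p-1} p^a q^b$, which depends on $a,b$ essentially only through the product $p^a q^b$; so two points with nearly equal $p^a q^b$ have nearly equal $h$ up to the term $\frac{q^b}{q-1}\bigl(1 - \frac{p}{p-1}\bigr) = -\frac{q^b}{(q-1)(p-1)}$ plus lower-order pieces. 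This shows $h$ \emph{prefers smaller} $b$ among points of comparable magnitude, which is exactly the direction we want. I would make this precise by writing $h(i,j) = \frac{p}{p-1} p^i q^j - \frac{q^j}{(q-1)(p-1)} - \frac{1}{q-1} + \frac{q^j}{q-1}\cdot\frac{-1}{1}$... more cleanly, $h(a,b) = \frac{p}{p-1} p^a q^b - \frac{1}{p-1}q^b + \frac{q^b - 1}{q-1}$, then bound the contribution of the low-order terms against the gap in the leading term created by the fact that a lower fiber of $Z_m$ comes within a bounded multiplicative factor (at most $q$, and by maximality of $z_m$ much less for the top fiber) of any given element.

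The main obstacle I anticipate is controlling the leading-term gap $\frac{p}{p-1}(p^{i'}q^{j'} - p^i q^j)$ when it is negative — i.e. when the lower-fiber competitor is genuinely smaller as an integer — and showing it is still outweighed by the savings $\frac{q^j - q^{j'}}{p-1}$ in the negative middle term. This is where the choice of competitor matters: one should not jump all the way down to fiber $b$, but descend one fiber at a time, from $j$ to $j-1$, and show $h$ strictly increases at each step, using that the magnitude drops by a factor at most $q^{\text{(something)}}$ bounded by the fiber spacing $\lceil\rho\rceil$; here the inequality $\frac{q-1}{p-1} \ge$ the relevant ratio, together with $q > p$, should close the gap, but the bookkeeping with the floor functions in $a(b)$ will need care. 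If descending one fiber does not always strictly increase $h$ (it might not, as the example $h(1,5)=h(4,3)$ shows equalities occur), the fallback is to descend all the way to fiber $b$ and use the special maximality of $z_m$, which gives a much tighter bound $p^i q^j \le p^a q^b$ with room to spare once $j - b \ge 1$ forces $i$ down by at least $\lceil\rho\rceil \ge 2$.
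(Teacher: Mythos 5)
Your primary route---descend through $Z_m$ one fiber at a time and show that $h$ strictly increases at each step---fails, and not merely because of occasional equalities: $h$ is genuinely non-monotone along the fibers of $Z_m$. Table~\ref{tab:750} already provides a counterexample for $(p,q)=(2,3)$, $m=750$: descending from $(3,4)$ to $(4,3)$ drops $h$ from $1255$ to $850$, and the next step down, $(6,2)$, climbs back to $1147$. So no per-fiber monotonicity argument can be repaired; the one-step comparison is simply the wrong key lemma.

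Your fallback---compare the competitor directly with $z_m$---is the right move (it is the paper's proof), but you stop just short of the computation that settles it, and the difficulties you anticipate there are not real. Rewriting \eqref{def:h} as $h(a,b)=\frac{p}{p-1}\left(p^aq^b-rq^b\right)-\frac{1}{q-1}$ with $r=\frac{q-p}{pq-p}\in(0,1/p)$ gives the exact equivalence: $h(a,b)>h(i,j)$ if and only if $p^aq^b-p^iq^j>r\,(q^b-q^j)$. Now take any $p^iq^j\in Z_m$ with $j>b$. The left-hand side is strictly positive because $z_m=p^aq^b$ is the maximum of $Z_m$ and distinct from $p^iq^j$, while the right-hand side is strictly negative because $j>b$. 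Hence $h(i,j)<h(a,b)\leq G(m)$, so $p^iq^j\notin Y_m$. In particular, no ``room to spare'' is needed, no lower bound on $a-i$ of size $\lceil\rho\rceil$, and the leading-term gap you worried about can never be negative when the competitor is $z_m$ itself: both the leading term (by maximality) and the low-order term (which, as you correctly observed, favors the smaller exponent of $q$) point the same way. As written, your proposal leaves this decisive sign check unverified, so it is not yet a proof; once the fiber-by-fiber plan is replaced by this single comparison, it coincides with the paper's argument.
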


\begin{proof}
Using \eqref{def:h}, we have  
\begin{align*}
  \frac {p-1}p\ h(a,b) 
     &= \frac{p-1}{p}\ \frac{q^b-1}{q-1} + \frac{q^b}{p}(p^{a+1}-1) \\[1ex]
     &= p^aq^b - q^b\ \frac{q-p}{pq-p} - \frac{p-1}{p(q-1)},  
\end{align*}
so that 
\begin{equation}
  \label{h(a,b)}
  h(a,b) = \frac{p}{p-1}\left(p^aq^b - rq^b \right) -
  \frac{1}{q-1}, \quad\text{where } r = \frac{q-p}{pq-p} \in (0,1/p)
\end{equation}
Note that $0 < r < 1/p$, because $pq-p > p(q-p)$.
As a consequence, we have
\begin{equation}
  \label{h-h}
  h(a,b)>h(a',b') \quad\Longleftrightarrow \quad
  p^aq^b - p^{a'}q^{b'}>r(q^b-q^{b'})
\end{equation} 
If $p^{a'}q^{b'} \in Z_m$ we have $z_m = p^aq^b > p^{a'}q^{b'}$.
Hence $b' > b$ implies $h(a,b)>h(a',b')$, which concludes the proof.
\end{proof}

Geometrically, Proposition~\ref{prop:droite} tells us that the points $(a,b) \in
\n^2$ such that $h(a,b) = G(m)$ cannot be located ``above'' or equivalently
``left'' of $z_m$.  In particular, when $z_m = p^a$, we have $G(m) = h(a,0) =
(p^{a+1}-1)/(p-1)$.

In Proposition~\ref{prop:Ym=2}, we will see that the set $Y_m$ has at most two
elements.  For now, let us first focus on those elements of $E$ that provide the
heaviest \pqs{} in a unique way, i.e. those for which $Y_{p^aq^b}=\{p^aq^b\}$.
The following theorem shows that the corresponding points in $\n^2$ form an
infinite area whose boundary is a particular sequence as illustrated in
Figure~\ref{fig:Z_750}.

\begin{theo}
  \label{th:ell}
  There exists a sequence $\ell=(\ell_b)_{b\in\n}$ in $\n$ such that 
  \begin{equation}
    \label{strong}
    Y_{p^aq^b} = \{p^aq^b\} \Longleftrightarrow a \geq \ell_b
  \end{equation}
  Moreover, the sequence $\ell$ is non-decreasing, unbounded, and satisfies $\ell_0=0$.
\end{theo}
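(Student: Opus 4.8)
The plan is to reduce the condition ``$Y_{p^aq^b}=\{p^aq^b\}$'' to a finite list of elementary inequalities in $a$, one for each level strictly below $b$, and then set $\ell_b$ to be the largest threshold occurring in that list. First I would fix $a,b\in\n$ and put $m=p^aq^b$, so that $z_m=p^aq^b$. As recalled at the end of Section~\ref{sec:maxim-elem}, $G(m)$ is attained on $Z_m$, and $Y_m\subseteq Z_m$; hence $Y_{p^aq^b}=\{p^aq^b\}$ exactly when $h(a,b)>h(a',b')$ for every $(a',b')\in Z_m$ with $p^{a'}q^{b'}\neq p^aq^b$. By Proposition~\ref{prop:droite} any such competitor satisfies $b'\le b$, and by Lemma~\ref{lemm:Z_m} there is exactly one point of $Z_m$ on each level $j\in\{0,\dots,b\}$; since $\log_p m=a+b\rho$, that point is $(a,b)$ when $j=b$ and $(a+c_k,\,b-k)$ when $j=b-k$ with $1\le k\le b$, where $c_k:=\lfloor k\rho\rfloor$. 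Therefore
\[
  Y_{p^aq^b}=\{p^aq^b\}\ \Longleftrightarrow\ h(a,b)>h(a+c_k,b-k)\quad\text{for every }k\in\{1,\dots,b\}.
\]

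Next I would simplify each of these inequalities: substituting $p^aq^b-p^{a+c_k}q^{b-k}=p^aq^{b-k}(q^k-p^{c_k})$ and $q^b-q^{b-k}=q^{b-k}(q^k-1)$ into~\eqref{h-h} and cancelling the positive common factor $q^{b-k}$ gives
\[
  h(a,b)>h(a+c_k,b-k)\ \Longleftrightarrow\ p^a\bigl(q^k-p^{c_k}\bigr)>r\,(q^k-1),
\]
a condition I will call $C_k(a)$. Since $\rho$ is irrational and $k\ge1$, we have $c_k<k\rho$, hence $p^{c_k}<q^k$, so $q^k-p^{c_k}$ is a fixed positive real; thus $C_k(a)$ holds once $a$ is large and, its left-hand side being increasing in $a$, it holds precisely for $a\ge m_k$, where $m_k:=\min\{a\in\n:\ p^a(q^k-p^{c_k})>r(q^k-1)\}$ is a well-defined element of $\n$.

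I would then set $\ell_0:=0$ and $\ell_b:=\max(m_1,\dots,m_b)$ for $b\ge1$. For $b\ge1$, combining the two displays yields $Y_{p^aq^b}=\{p^aq^b\}\iff a\ge m_k$ for all $1\le k\le b\iff a\ge\ell_b$, which is~\eqref{strong}. For $b=0$, Proposition~\ref{prop:droite} forces $Y_{p^a}\subseteq\{p^a\}$, and $Y_{p^a}\neq\emptyset$, so $Y_{p^a}=\{p^a\}$ for every $a\in\n$, in agreement with $\ell_0=0$. Finally $\ell_{b+1}=\max(\ell_b,m_{b+1})\ge\ell_b$, so $\ell$ is non-decreasing, and each $\ell_b$ is a natural number.

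It remains to prove that $\ell$ is unbounded, which I expect to be the heart of the argument. Dividing $C_k(a)$ by $q^k$ and using $p^{c_k}/q^k=p^{c_k-k\rho}=p^{-\{k\rho\}}$ rewrites it as $p^a\bigl(1-p^{-\{k\rho\}}\bigr)>r\,(1-q^{-k})$. If $\ell$ were bounded by some $L$, then $m_k\le\ell_k\le L$ for every $k\ge1$, so $C_k(L)$ holds, i.e.\ $1-p^{-\{k\rho\}}>p^{-L}r(1-q^{-k})\ge p^{-L}r/2$ (using $q\ge2$); equivalently $\{k\rho\}\ge\eta$ for all $k\ge1$ and some fixed $\eta>0$. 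This contradicts the well-known fact that $\inf_{k\ge1}\{k\rho\}=0$ for every irrational $\rho$ — equivalently, that along a subsequence the powers $p^{c_k}$ approach $q^k$ from below, with ratio $q^k/p^{c_k}=p^{\{k\rho\}}\to1$. Hence $\ell$ is unbounded. The only genuine obstacle is isolating and invoking this Diophantine approximation property; everything else is bookkeeping with~\eqref{h-h} and Lemma~\ref{lemm:Z_m}.
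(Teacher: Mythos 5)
Your proof is correct, and it organizes the argument differently from the paper in a way worth recording. The paper proceeds in two abstract steps: it first exhibits, for each $b$, some $a$ with $p^aq^b\in Y_{p^aq^b}$ via the crude sufficient bound $p^a\ge r(q^b-1)$, then shows the property propagates upward in $a$; monotonicity of $\ell$ is proved separately through the identity $h(i,j+1)=qh(i,j)+1$, and unboundedness by comparing with the level-$0$ point of $Z_{p^aq^b}$ and equidistribution of $(b\rho)$ modulo $1$. You instead make the comparison with each level exact: by Lemma~\ref{lemm:Z_m} and \eqref{h-h}, uniqueness at $(a,b)$ is equivalent to the $b$-independent conditions $p^a\bigl(q^k-p^{\lfloor k\rho\rfloor}\bigr)>r(q^k-1)$ for $1\le k\le b$, whence $\ell_b=\max_{1\le k\le b}m_k$ with explicit thresholds $m_k$. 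This yields \eqref{strong}, the monotonicity and $\ell_0=0$ in one stroke, and your $m_k$ is essentially $\lfloor\alpha(k)\rfloor$ from \eqref{alpha}, so you recover at once information the paper only extracts later via \eqref{lgap}; your unboundedness argument (a bound $L$ on $\ell$ would force $\{k\rho\}\ge\eta>0$ for all $k\ge1$) uses the same Diophantine input as the paper, packaged as $\inf_{k\ge1}\{k\rho\}=0$ instead of equidistribution, which is equally standard. One phrase needs repair: competitors in $Z_{p^aq^b}$ with $b'>b$ do exist in general (Proposition~\ref{prop:droite} only says they cannot belong to $Y_m$), so you cannot assert that ``any such competitor satisfies $b'\le b$''. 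The correct statement, which follows from Proposition~\ref{prop:droite} together with $Y_m\neq\emptyset$ and the one-point-per-level property of Lemma~\ref{lemm:Z_m} (or directly from the strict inequality established in the proof of that proposition), is that competitors at levels $b'>b$ are automatically strictly beaten by $(a,b)=z_m$; with that one-line fix your reduction to the levels $b'<b$, and hence the whole argument, stands.
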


\begin{proof}
  Let us first establish the following statements:
  \begin{enumerate}[(i)]
  \item For all $b\geq 0$, there exists $a\geq 0$ such that $p^aq^b\in Y_{p^aq^b}$.\\[-2ex]
  \item If $p^aq^b\in Y_{p^aq^b}$ then, for all $k\geq 1$, $Y_{p^{a+k}q^b}=\{p^{a+k}q^b\}$.
  \end{enumerate}
  
  Let $b\in\n$. As already seen, the mapping $(i,j) \mapsto p^iq^j$ is
  one-to-one. Therefore, we have $q^b-p^iq^j\geq 1$ for all $p^iq^j\in Z_{q^b}
  \setminus \{q^b\}$.  Choose $a$ such that
  $$
  p^a \geq r (q^{b}-1), \quad\text{where } r = \frac{q-p}{pq-p} \text{ as
    in~\eqref{h(a,b)}}
  $$
  Then, for all $p^iq^j\in Z_{q^b}$ with $j<b$, we have
  \begin{equation}
    \label{pa}
     p^{a}q^b - p^{a+i}q^j \geq p^a \geq r (q^b - 1) \geq r (q^b - q^j)
  \end{equation} 
  Using~\eqref{h-h}, it follows that $h(a,b) \geq h(a+i,j)$; in other words
  $p^aq^b \in p^a Z_{q^b}$.  According to Proposition~\ref{prop:droite}, we also
  have $Y_{p^aq^b}\subset\{p^iq^j\in Z_{p^aq^b}:j\leq b\}$.  Using
  Lemma~\ref{lemm:Z_m}, it is immediate to check that the latter set is
  identical to $p^aZ_{q^b}$, thus $p^aq^b\in Y_{p^aq^b}$ and (i) is proved.
  Now, if $p^aq^b\in Y_{p^aq^b}$ then replacing $a$ by $a+k$ for any $k\geq 1$
  turns~\eqref{pa} into a strict inequality.  Therefore,
  $Y_{p^{a+k}q^b}=\{p^{a+k}q^b\}$, and (ii) is proved too.  Accordingly, letting
  \begin{equation}
    \label{eq:lb}
    \ell_b = \min \left\{ a \in \n : Y_{p^aq^b} =\{p^aq^b\} \right\}  
  \end{equation}
  provides the claimed sequence $\ell$.  Since $Y_1=\{1\}$ and $1 = p^0q^0$, we
  get $\ell_0=0$.  
  \medskip

  Let us now prove that $\ell$ is non-decreasing.  Given $b\in\n$, either
  $\ell_b = 0$ thus $\ell_{b+1} \geq \ell_b$, or $\ell_b\geq 1$.  In the latter,
  there exists $p^iq^j \in Z_{p^{\ell_b-1}q^b}$ such that $j \neq b$ and
  $h(i,j)\geq h(\ell_b-1,b)$.  From~\eqref{def:h}, it is not difficult to see
  that $h(i,j+1) = qh(i,j) + 1$, and thus $h(i,j+1) - h(\ell_{b}-1,b+1) = q
  \left( h(i,j) - h(\ell_b-1,b) \right)$. Therefore $h(i,j+1) \geq
  h(\ell_b-1,b+1)$, so that $\ell_{b+1}>\ell_b-1$.  \medskip
  
  Finally suppose that $\ell$ is bounded.  This would imply that there exists an
  integer $a$ such that, for all $b\in\n$, $Y_{p^aq^b}=\{p^aq^b\}$.  The
  following statement shows that this is impossible.
  \begin{enumerate}
  \item[(iii)] For all $a\in\n$ there exists $b\in\n$ such that $h(a+\lfloor
    b\rho\rfloor,0)>h(a,b)$.
  \end{enumerate}
  \medskip

  Indeed, by Lemma~\ref{lemm:Z_m} we know that $p^{a+\lfloor b\rho\rfloor}\in
  Z_{p^aq^b}$.  Fix $a\in\n$, choose $b\in\n^*$, and set $a'=a+\lfloor
  b\rho\rfloor$.  According to~\eqref{h-h}, and denoting by
  $\{b\rho\}=b\rho-\lfloor b\rho\rfloor$ the fractional part of $b\rho$, we have
  \begin{align}
    h(a,b) < h(a',0) 
       & \Leftrightarrow p^aq^b-p^{a'} < r(q^b-1) \notag \\
       & \Leftrightarrow p^{\lfloor b\rho\rfloor} > q^b \left(1 -
         r/p^{a} \left(1- 1/q^b \right)\right) \notag \\
       & \Leftrightarrow  \{ b\rho \} <
       -\log_p\left(1-r/p^{a}\left(1-1/q^b\right)\right)
       \label{majfrac}
  \end{align}
  Now observe that sequence $(\phi_{a,b})_{b \in \n}$ defined by 
  \begin{equation}
    \label{eq:phiab}
    \phi_{a,b} = -\log_p \left(1-\frac r{p^{a}}\left(1-\frac1{q^b}\right)\right),
  \end{equation}
  where $r = (q-p)/(pq-p)$, is increasing, with $\phi_{a,0}=0$.  Since $\rho$ is
  irrational, we know that $(b\rho)_{b\in\n}$ is equidistributed modulo 1. Thus,
  there exists $b>0$ such that $\{b\rho\} < \phi_{a,1}$, hence $\{b\rho\} <
  \phi_{a,b}$, which using~\eqref{majfrac} concludes the proof.
\end{proof}

Let us anticipate a result of the next section, implying that the sequence
$\ell$ is completely known as soon as we can compute its \emph{jump indices},
that is, the values $b>0$ for which $\ell_b>\ell_{b-1}$.  Indeed, we shall
establish with statement~\eqref{lgap} that if $b$ is a jump index of $\ell$ then
$\ell_b=\left\lfloor\alpha(b) \right\rfloor$, where
\begin{equation}
  \label{alpha}
  \alpha(b)= \log_p \frac{q^{b}-1}{q^b-p^{\lfloor b\rho\rfloor}}+ \log_p \frac{q-p}{q-1}
\end{equation}

Computing the jump indices of $\ell$ may be done recursively as shown in the
next corollary.  Referring to the sequence $\phi$ defined in~\eqref{eq:phiab},
let the mapping $\beta$ be defined on $\n$ by
\begin{equation}
  \label{beta}
  \beta(a) = \min\left\{ j\in\n^{*}:\;\{j\rho\} < \phi_{a,j} \right\}
  \end{equation}
\begin{coro}
  \label{coro:gaps}
  The increasing sequence $(j_k)_{k\in\n}$ of the jump indices of $\ell$
  satisfies
  $$
  j_0=\beta(0), \quad j_{k+1}=\beta(\ell_{j_k})
  $$
\end{coro}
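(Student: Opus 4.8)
The plan is to recast the claimed recursion as a single identity about $\ell$ and then read it off. Since $\ell$ is non-decreasing with $\ell_0=0$ (Theorem~\ref{th:ell}), it is constant on each block $[j_k,j_{k+1})$, vanishes on $[0,j_0)$, and strictly increases at every $j_k$; hence $j_0=\min\{b\in\n^*:\ell_b>0\}$ and, using that $\ell_b\le\ell_{j_k}$ whenever $b\le j_k$, also $j_{k+1}=\min\{b\in\n^*:\ell_b>\ell_{j_k}\}$. It therefore suffices to prove, for every $a\in\{0\}\cup\{\ell_{j_k}:k\in\n\}$, that $\min\{b\in\n^*:\ell_b>a\}=\beta(a)$.

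Next I would make the event ``$\ell_b>a$'', i.e.\ ``$Y_{p^aq^b}\neq\{p^aq^b\}$'' (cf.\ \eqref{eq:lb}), explicit. Here $z_{p^aq^b}=p^aq^b$, so Lemma~\ref{lemm:Z_m} and Proposition~\ref{prop:droite} give: the elements of $Z_{p^aq^b}$ whose exponent on $q$ is strictly less than $b$ are exactly the points $p^{a+\lfloor k\rho\rfloor}q^{b-k}$ for $1\le k\le b$; the only element of $Z_{p^aq^b}$ with exponent $b$ on $q$ is $p^aq^b$ itself; and every element of $Y_{p^aq^b}$ has exponent at most $b$ on $q$. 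Together these show that $Y_{p^aq^b}=\{p^aq^b\}$ if and only if $h(a,b)>h\bigl(a+\lfloor k\rho\rfloor,\,b-k\bigr)$ for all $1\le k\le b$. Substituting the closed form \eqref{h(a,b)} into this inequality makes the common factor $q^{b-k}$ cancel and leaves the $b$-free condition $p^a\bigl(q^k-p^{\lfloor k\rho\rfloor}\bigr)>r(q^k-1)$, which by the equivalences collected in \eqref{majfrac} (with $b$ there replaced by $k$) is the same as $\{k\rho\}>\phi_{a,k}$, $\phi$ as in \eqref{eq:phiab}. Consequently $\ell_b>a$ iff $\{k\rho\}\le\phi_{a,k}$ for some $1\le k\le b$, so that $\min\{b\in\n^*:\ell_b>a\}=k_0$, where $k_0:=\min\{k\in\n^*:\{k\rho\}\le\phi_{a,k}\}$.

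Comparing $k_0$ with $\beta(a)=\min\{k\in\n^*:\{k\rho\}<\phi_{a,k}\}$ from \eqref{beta}, clearly $k_0\le\beta(a)$, with equality unless $\{k_0\rho\}=\phi_{a,k_0}$; the proof thus reduces to excluding this borderline equality. Running the chain \eqref{majfrac} backwards, $\{k_0\rho\}=\phi_{a,k_0}$ means $p^a\bigl(q^{k_0}-p^{\lfloor k_0\rho\rfloor}\bigr)=r(q^{k_0}-1)$, i.e.\ $a=\alpha(k_0)-1$ with $\alpha$ as in \eqref{alpha}; in particular $\alpha(k_0)=a+1$ would be an integer. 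For $a=\ell_{j_k}$ the second paragraph identifies $k_0$ with $\min\{b\in\n^*:\ell_b>\ell_{j_k}\}=j_{k+1}$ (and for $a=0$ with $j_0$), so the announced relation \eqref{lgap} would force $\ell_{j_{k+1}}=\lfloor\alpha(j_{k+1})\rfloor=\ell_{j_k}+1$ (resp.\ $\alpha(j_0)=1$). Hence the argument is complete once one rules out that $\alpha$ takes an integer value at a jump index, and I expect \emph{this} to be the only genuine obstacle — everything before it is the (routine) cancellation of $q^{b-k}$, which makes the comparison of $h$-values independent of $b$ and thereby produces $\phi$ and $\beta$, together with the bookkeeping for the non-decreasing sequence $\ell$. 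The obstacle itself can be addressed by rewriting the offending equality as $p^{a+1}(q-1)\bigl(q^{k_0}-p^{\lfloor k_0\rho\rfloor}\bigr)=(q-p)(q^{k_0}-1)$ and comparing $p$-adic valuations: the left-hand side is divisible by $p^{\,\ell_{j_k}+1}$, and $\ell_{j_k}\to\infty$ along the jump indices since $\ell$ is unbounded (Theorem~\ref{th:ell}), which after a short case analysis on $v_p(q)$ and $v_p(q-1)$ is incompatible with the right-hand side.
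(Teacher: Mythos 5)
Your first two paragraphs are correct, and in substance they reproduce the paper's own argument by a slightly different route: the paper proceeds by incrementing $b$ and combining \eqref{eq:Z_qm} with the homogeneity of the criterion \eqref{h-h} under multiplication by $q$, whereas you read the competitors of $p^aq^b$ directly off Lemma~\ref{lemm:Z_m}, discard the exponents $j>b$ via Proposition~\ref{prop:droite}, and cancel $q^{b-k}$. Either way one arrives at the same fact: $Y_{p^aq^b}=\{p^aq^b\}$ if and only if $\{k\rho\}>\phi_{a,k}$ for all $1\le k\le b$, hence $\min\{b\in\n^*:\ell_b>a\}=\min\{k\in\n^*:\{k\rho\}\le\phi_{a,k}\}$, and your reduction of the corollary to the identity $\min\{b:\ell_b>a\}=\beta(a)$ for $a\in\{0\}\cup\{\ell_{j_k}\}$ is clean.

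The gap is the borderline case you flag in the last paragraph, and the patch you propose for it cannot work. The auxiliary statement you aim at --- that $\alpha$ never takes an integer value at a jump index --- is simply false: for $(p,q)=(2,3)$ the index $b=2$ is a jump index ($\ell_1=0$, $\ell_2=2$), and $\alpha(2)=\log_2\frac{9-1}{9-8}+\log_2\frac{1}{2}=2\in\n$, the underlying equality being $h(1,2)=h(4,0)=31$, i.e.\ $\{2\rho\}=\phi_{1,2}$. Consequently no $p$-adic (or other) argument can establish it; moreover the valuation comparison itself is inconclusive, since $v_p\bigl((q-p)(q^{k_0}-1)\bigr)$ is unbounded as $k_0$ varies (lifting-the-exponent when $p\nmid q$), and for a fixed putative counterexample the exponent $a+1=\ell_{j_k}+1$ is a fixed number, so ``$\ell_{j_k}\to\infty$'' buys nothing. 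What is actually needed is strictly weaker than your claim: equality $\{k_0\rho\}=\phi_{a,k_0}$ must be excluded only at the \emph{first} failure index and only for the attained plateau values $a\in\{0\}\cup\{\ell_{j_k}\}$; in the example above the equality occurs for the skipped value $a=1$ (the jump at $b=2$ has height $2$), which is why the corollary survives there. You honestly leave this point open, so the proof is incomplete as written --- though it is fair to note that you have put your finger on a subtlety the paper's own proof passes over silently: it treats the condition $\{b\rho\}\ge\phi_{a,b}$, i.e.\ $h(a,b)\ge h(a+\lfloor b\rho\rfloor,0)$, as if it were the strict inequality required for $Y_{p^aq^b}=\{p^aq^b\}$, which is exactly the equality case at issue.
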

\begin{proof}
  Given any $a\in\n$, we know that $Y_{p^a}=\{p^a\}$ by
  Proposition~\ref{prop:droite}.  Moreover, letting $b$ be incremented by 1 from
  0 iteratively, it follows from \eqref{eq:Z_qm} and \eqref{h-h} that
  $Y_{p^aq^b}=\{p^aq^b\}$ as long as $h(a,b)>h(a+\lfloor b\rho\rfloor,0)$.  As
  it is shown in part (iii) of the proof of Theorem~\ref{th:ell}, the latter
  inequality is equivalent to $\{b\rho\}<\phi_{a,b}$.  Accordingly,
  $Y_{p^aq^b}=\{p^aq^b\}$ if $b<\beta(a)$.  Hence $\ell_{\beta(a)-1}\leq
  a<\ell_{\beta(a)}$, so that $\beta(a)$ is a jump index for $\ell$.  The result
  follows immediately since $\ell$ is non-decreasing.
\end{proof}

As claimed before, we next show that $Y_m$ has at most 2 elements.  This might
be established by directly using~\eqref{h-h} and studying the diophantine
equation
$$
p^aq^b - p^{c} = r(q^b-1), \quad\text{where } 
r = \frac{q-p}{pq-p}
$$
Unfortunately, the latter is seemingly not easy to cope with, whereas
Theorem~\ref{th:ell} proves handy.

\begin{prop}
  \label{prop:Ym=2}
  For all $m\in\n$, the set $Y_m$ has either one or two elements. 
\end{prop}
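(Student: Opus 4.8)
The plan is to leverage Theorem~\ref{th:ell} together with Proposition~\ref{prop:droite} to pin down the possible shapes of $Y_m$. First I would reduce to the case $m=z_m$, i.e. $m=p^aq^b$ with $z_m=m$: indeed $Y_m\subset Z_m$ and every element of $Y_m$ gives the same weight $G(m)$, so the analysis only depends on which maximal lattice points can tie. Writing $z_m=p^{a_0}q^{b_0}$, Proposition~\ref{prop:droite} confines $Y_m$ to the points $p^iq^j\in Z_m$ with $j\le b_0$, and by Lemma~\ref{lemm:characterize} and formula~\eqref{def:h} we have $G(m)=\max_{j\le b_0}h(a_j,j)$ where $a_j=\lfloor\log_p m-j\rho\rfloor$ indexes the unique element of $Z_m$ at height $j$.

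The core of the argument is to show that among the competing rows $j=0,1,\dots,b_0$, at most two can attain the maximum of $h$. Using~\eqref{h-h}, a tie $h(a_j,j)=h(a_{j'},j')$ with $j<j'$ is equivalent to $p^{a_j}q^{j}-p^{a_{j'}}q^{j'}=r(q^{j}-q^{j'})$ with $r=(q-p)/(pq-p)$. The key step is a convexity/monotonicity observation: as $j$ decreases from $b_0$, the quantity $h(a_j,j)$ first (weakly) increases then the structure of ties is rigid. Concretely, I would argue via Theorem~\ref{th:ell} applied in each row: for the row of height $j$, the point $p^{a_j}q^j$ equals $p^{a_j}q^j$ with $a_j$ either $\geq\ell_j$ (so that row is a ``strict winner in its own right'', $Y_{p^{a_j}q^j}=\{p^{a_j}q^j\}$) or $a_j<\ell_j$. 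The point is that the rows with $a_j\ge\ell_j$ cannot tie with each other — if $p^{a}q^{b}\in Y_{p^aq^b}$ strictly and $p^{a'}q^{b'}\in Y_{p^{a'}q^{b'}}$ strictly with $b\ne b'$, then scaling both up to a common bound $m$ (multiplying by the appropriate powers of $p$) and invoking~\eqref{h-h} with the strictness from Theorem~\ref{th:ell} forces a strict inequality, contradicting a tie. So at most one row with $a_j\ge\ell_j$ contributes to $Y_m$; I then need to bound the rows with $a_j<\ell_j$.

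For the rows with $a_j<\ell_j$, I would use the non-decreasing, unbounded nature of $\ell$ from Theorem~\ref{th:ell} together with the near-linear constraint $a_j\approx\log_p m-j\rho$: since $a_j$ decreases by roughly $\rho>1$ as $j$ increases by $1$, while $\ell_j$ is non-decreasing, the set of $j\le b_0$ with $a_j<\ell_j$ is an interval of small indices near $j=b_0$, and in fact I expect it to contain at most one index besides the extremal one, again by a direct comparison of $h$-values using~\eqref{h-h}. Putting these together, $Y_m$ consists of at most the one ``strict'' winner plus at most one ``boundary'' competitor, giving $|Y_m|\le 2$. I also need to exhibit that $|Y_m|=2$ does occur, which the examples $h(1,5)=h(4,3)$ for $(p,q)=(2,3)$ and $h(0,2)=h(4,0)$ for $(p,q)=(2,5)$ already supply, and that $|Y_m|\ge1$ always, which is immediate since $P_m$ is finite and nonempty.

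The main obstacle I anticipate is ruling out three-way ties cleanly: the relation~\eqref{h-h} is a genuine exponential Diophantine condition, and showing that no two of the ``boundary'' rows $a_j<\ell_j$ can simultaneously tie the maximum — and that a boundary row and the strict winner cannot both be matched by yet a third row — requires either a careful exploitation of the strict inequalities in~\eqref{pa} from the proof of Theorem~\ref{th:ell}, or a monotonicity lemma for $j\mapsto p^{a_j}q^j-rq^j$ along $Z_m$. I would try to phrase the whole argument as: sort the at-most-$b_0{+}1$ candidate values $h(a_j,j)$ and show the top value is achieved on a set that, by~\eqref{h-h} and Theorem~\ref{th:ell}, cannot have size $\ge3$; the equidistribution input is not needed here, only the qualitative properties of $\ell$ already proved.
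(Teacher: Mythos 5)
Your outline gets the easy half right: at most one \emph{strict} point (one with $a_j\ge\ell_j$, i.e.\ $Y_{p^{a_j}q^j}=\{p^{a_j}q^j\}$) can lie in $Y_m$, and it must be the smallest element of $Y_m$, since any smaller element of $Y_m$ would also belong to that singleton $Y$-set. The gap is in the other half, your treatment of the ``boundary'' rows. The rows $j$ of $Z_m$ with $a_j<\ell_j$ are exactly those with $m_\ell<j\le\lfloor\log_q m\rfloor$ (compare \eqref{mell}), so their number is $\lfloor\log_q m\rfloor-m_\ell$, which by \eqref{C0} is at least $\lfloor\ell_{m_\ell}/\rho\rfloor$ and hence unbounded in $m$ because $\ell$ is unbounded; your expectation that this set ``contains at most one index besides the extremal one'' is therefore false. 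What you actually need is the weaker claim that at most one non-strict point can attain $G(m)$, but for that your proposal offers only ``a direct comparison of $h$-values using \eqref{h-h}'', which is precisely the exponential Diophantine tie problem you yourself flag as the obstacle; and your fallback, monotonicity of $j\mapsto p^{a_j}q^j-rq^j$ along $Z_m$, is also false (see Table~\ref{tab:750}: listing the rows of $Z_{750}$ by increasing $j$, the values $h(a_j,j)$ are $1023,766,1147,850,1255,850,1093$, not monotone, and this quantity is an increasing affine function of $h$). So as it stands your argument does not rule out two non-strict points tying at the top, i.e.\ a three-element $Y_m$.

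The missing idea --- and the way the paper avoids any Diophantine analysis --- is to pin down the largest element of $Y_m$ \emph{exactly}. If $\#Y_m\ge2$ and $p^aq^b=\max Y_m$, then $G(p^aq^b)=G(m)$, so $Y_{p^aq^b}=Y_m$: thus $p^aq^b$ belongs to its own $Y$-set without being alone in it, and statement (ii) in the proof of Theorem~\ref{th:ell} forces $a=\ell_b-1$, not merely $a<\ell_b$. For the second largest element $p^cq^d$, Proposition~\ref{prop:droite} (applied with bound $p^aq^b$, whose $z$-value is itself) gives $d<b$, Lemma~\ref{lemm:Z_m} gives $c>a$, hence $c\ge a+1=\ell_b\ge\ell_d$ by monotonicity of $\ell$; so $Y_{p^cq^d}=\{p^cq^d\}$, and a third element of $Y_m$, which would also lie in $Y_{p^cq^d}$, cannot exist. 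Your plan has all the ingredients (Proposition~\ref{prop:droite}, monotonicity of $\ell$, the exclusion principle for strict points) except this exactness step $a=\ell_b-1$, and without it the argument does not close.
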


\begin{proof}
  Assume $\#Y_m \geq 2$ and denote by $p^aq^b$ its greatest element.  Since
  $Y_m=Y_{p^aq^b}$, we have $a<\ell_b$ by definition of sequence $\ell$ in
  Theorem~\ref{th:ell}.  To be more precise, statement (ii) in the proof of this
  theorem even tells us that $a=\ell_b-1$.  Now let $p^{c}q^{d}$ be the second
  greatest element in $Y_m$.  According to Proposition~\ref{prop:droite} we must
  have $d<b$, thus $c>a$ by Lemma~\ref{lemm:Z_m}.  Since $\ell$ is
  non-decreasing, it follows that $\ell_{d}\leq\ell_b$.  Since $a=\ell_b-1$ we
  get $c\geq\ell_{d}$, which means that $Y_{p^{c}q^{d}}=\{p^{c}q^{d}\}$ from
  Theorem~\ref{th:ell}.  Therefore, there cannot exist a third element in $Y_m$
  as it would also be in $Y_{p^{c}q^{d}}$.
\end{proof}

\section{Asymptotic behavior of $G$}\label{sec:asymptotics}

In this section, our goal is to prove that $G(m)$ is equivalent to $mp/(p-1)$ as
$m$ tends to infinity, independently of $q$.  As a simple first step, let us
exhibit a sharp upper bound for $G$.

\begin{lemm}
  \label{lemm:upperbound}
  For all $m \in \n$ and all $n \in Y_m$, we have $ G(m)< np/(p-1)$. In
  particular,
  $$ \limsup_{m\rightarrow\infty} \frac{G(m)}{m} = \frac{p}{p-1} $$ 
\end{lemm}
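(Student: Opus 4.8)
The plan is to start from the closed form~\eqref{h(a,b)} for $h$, which for any $n = p^aq^b$ gives
$$
h(a,b) = \frac{p}{p-1}\bigl(n - rq^b\bigr) - \frac{1}{q-1},
$$
with $r = (q-p)/(pq-p) > 0$. Since $q^b \geq 1$ and $r>0$, the subtracted term $\frac{p}{p-1}rq^b + \frac{1}{q-1}$ is strictly positive, so for any $n\in Y_m$ we immediately get
$$
G(m) = h(a,b) < \frac{p}{p-1}\,n,
$$
which is the first claim. The point here is purely that $h$ undershoots the ``ideal'' geometric-series value $pn/(p-1)$ by a positive amount; no case analysis is needed.

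For the $\limsup$ statement, I would combine this bound with Proposition~\ref{lemm:function_z}. On the one hand, since every $n\in Y_m$ satisfies $n \leq m$, the inequality just proved yields $G(m) < \frac{p}{p-1}m$, hence $\limsup_m G(m)/m \leq \frac{p}{p-1}$. On the other hand, to get the reverse inequality it suffices to exhibit a sequence of $m$ along which $G(m)/m \to p/(p-1)$. The natural choice is to take $m = z_m$ of the form $p^a$, i.e. $m$ running through the pure powers of $p$: when $m = p^a$, Lemma~\ref{lemm:characterize} (or directly~\eqref{def:h} with $b=0$) gives $G(p^a) \geq h(a,0) = (p^{a+1}-1)/(p-1)$, so $G(p^a)/p^a \geq \frac{p}{p-1} - \frac{1}{(p-1)p^a} \to \frac{p}{p-1}$. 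Together with the upper bound this pins the $\limsup$ at exactly $p/(p-1)$.

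There is essentially no hard step here — the lemma is a warm-up. The only mild subtlety is making sure the lower-bound sequence is legitimate: one must note that $G$ is evaluated at genuine integers $p^a$ (so $P_{p^a}$ contains $(a,0)$) and that $h(a,0)$ is a valid lower bound for $G(p^a)$ by definition~\eqref{def:G}, rather than trying to use $z_m$ for general $m$, where $G(m)$ need not equal $h$ at $z_m$ (as the $m=750$ example shows). Using the powers-of-$p$ subsequence sidesteps that issue cleanly, and Proposition~\ref{lemm:function_z} is not even strictly needed for this particular subsequence — though it would be the tool of choice if one preferred to argue along all $m$ via $z_m \sim m$ together with a lower bound $G(m) \geq h$ at the pure-$p$-power point of $Z_m$.
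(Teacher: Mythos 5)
Your proposal is correct and follows essentially the same route as the paper: the strict bound comes from the closed form~\eqref{h(a,b)} (the paper records it as~\eqref{h(n)-pn}), and the $\limsup$ is pinned down by combining $n\leq m$ with the subsequence $m=p^a$. The only cosmetic difference is that you use the lower bound $G(p^a)\geq h(a,0)$ directly from~\eqref{def:G}, whereas the paper invokes Proposition~\ref{prop:droite} to get the exact value $G(p^a)=(p^{a+1}-1)/(p-1)$; both suffice.
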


\begin{proof}
  Let $n=p^aq^b\in Y_m$. According to~\eqref{h(a,b)} we have
  \begin{equation}
    \label{h(n)-pn}
    h(a,b) - \frac{np}{p-1} = - \left(\frac{rpq^b}{p-1} + \frac{1}{q-1}
    \right) < 0, \quad \text{where } r \in (0,1/p)
  \end{equation} 
  Hence $G(m)=h(a,b) < np/(p-1)$. Since $n\leq m$, it follows that $G(m)/m \leq
  p/(p-1)$.  To conclude, observe that for $m=p^a$ we have $G(p^a) =
  (p^{a+1}-1)/(p-1)$ from Prop~\ref{prop:droite}.  Therefore,
  $\lim_{a\rightarrow\infty}G(p^a)/p^a= p/(p-1)$.
\end{proof}

Let us now define a mapping $y$ as follows: For all $m$, let $y_m$ denote the
smallest integer of the form $p^aq^b$ such that $G(m)=h(a,b)$, that is
\begin{equation}
  \label{ym}
  y_m=\min Y_m
\end{equation}

According to Proposition~\ref{prop:droite}, $y_m$ is also the element of $Y_m$
with the smallest exponent in $q$.  We shall next give a characterization of
$y_m$ using the sequence $\ell$ defined in Theorem~\ref{th:ell}.  Recall that
this sequence is defined by $\ell_b=\min\{a\in\n:Y_{p^aq^b}=\{p^aq^b\}\}$ and
satisfies \eqref{strong}.  Since $\ell$ is non-decreasing, the sequence
$(p^{\ell_b}q^b)_{b\in\n}$ is increasing.  We may thus define, for all $m\in\n$,
\begin{equation}
  \label{mell}
  m_\ell = \max\{b\in\n:p^{\ell_b}q^b\leq m\}
\end{equation}

\begin{theo}
  \label{prop:y_m}
  For all $m \in \n$, we have
  \begin{equation}
    \label{eq:ym}
    y_m = \max \{ p^aq^b \in Z_m : b \leq m_\ell \}
  \end{equation}
  Moreover, let $\bar{a}=\lfloor\log_p m-m_\ell\rho\rfloor$ and $\bar{m}=\lfloor
  m/p^{\bar{a}}\rfloor$. Then $y_m=p^{\bar{a}}z_{\bar{m}}$.
\end{theo}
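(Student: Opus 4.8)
The plan is to set $M:=\max\{p^aq^b\in Z_m:b\le m_\ell\}$, to write $M=p^{a_1}q^{b_1}$, and to prove first that $y_m=M$ (the first assertion) and then that $p^{\bar a}z_{\bar m}=M$ (the ``moreover'' part). Note that $M$ is well defined and belongs to $Z_m$, since the set over which the maximum is taken contains $p^{\lfloor\log_p m\rfloor}$ and is contained in $Z_m$. The key preliminary observation is this: because $b_1\le m_\ell$ and $\ell$ is non-decreasing, $p^{\ell_{b_1}}q^{b_1}\le p^{\ell_{m_\ell}}q^{m_\ell}\le m$ by \eqref{mell}; since $M\in Z_m$, Lemma~\ref{lemm:Z_m} tells us that $a_1$ is the largest exponent with $p^{a_1}q^{b_1}\le m$, so $a_1\ge\ell_{b_1}$, and \eqref{strong} gives $Y_M=\{M\}$. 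In particular $M$ is the unique maximiser of $h$ over $\{p^cq^d\in E:p^cq^d\le M\}$, so
\[
  p^cq^d\in E \ \text{and}\ p^cq^d<M \ \Longrightarrow\ h(c,d)<h(a_1,b_1);
\]
I will call this implication $(\star)$.

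The heart of the argument is to show that $h(a_1,b_1)=\max\{h(a,b):p^aq^b\in Z_m\}$, that is, $M\in Y_m$. I would prove $h(a,b)\le h(a_1,b_1)$ for every $p^aq^b\in Z_m$ by strong induction on the exponent $b$, using that $Z_m$ contains exactly one element per value of $b$ and that $h(a+1,b)-h(a,b)=p^{a+1}q^b>0$ by \eqref{def:h}. If $b\le m_\ell$ the corresponding element of $Z_m$ is $\le M$ by maximality of $M$, so either it equals $M$ or $(\star)$ applies. If $b>m_\ell$ then $p^{\ell_b}q^b>m$ whereas $p^aq^b\le m$, so that element has exponent $a<\ell_b$; by \eqref{strong}, $Y_{p^aq^b}\neq\{p^aq^b\}$, hence either $p^aq^b\notin Y_{p^aq^b}$ or $\#Y_{p^aq^b}\ge2$, and in both cases $Y_{p^aq^b}$ contains some $p^eq^f\neq p^aq^b$ with $h(e,f)\ge h(a,b)$. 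Since $z_{p^aq^b}=p^aq^b$, Proposition~\ref{prop:droite} forces $f\le b$, and $f=b$ is impossible (it would give $e<a$, hence $h(e,f)<h(a,b)$), so $f<b$ and $p^eq^f<p^aq^b\le m$. As $p^eq^f\le m$, the element of $Z_m$ with $q$-exponent $f$, say $p^{a'}q^f$, has $a'\ge e$, so $h(e,f)\le h(a',f)$; the induction hypothesis at $f<b$ then gives $h(a,b)\le h(e,f)\le h(a',f)\le h(a_1,b_1)$, which completes the induction.

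It follows that $G(m)=h(a_1,b_1)$, so $M\in Y_m$, while $(\star)$ shows that no element of $Z_m$ below $M$ lies in $Y_m$; hence $y_m=\min Y_m=M$, which is the first assertion. For the ``moreover'' part, reindexing by the exponent in $p$ shows $p^{\bar a}z_{\bar m}=\max\{p^eq^f\in E:p^eq^f\le m,\ e\ge\bar a\}$, because $p^cq^d\le\bar m=\lfloor m/p^{\bar a}\rfloor$ is equivalent to $p^{\bar a+c}q^d\le m$. Since $\bar a=\lfloor\log_p m-m_\ell\rho\rfloor$, Lemma~\ref{lemm:Z_m} makes the element of $Z_m$ with $q$-exponent $m_\ell$ have $p$-exponent exactly $\bar a$, while $\lfloor\log_p m-b\rho\rfloor$ decreases strictly in $b$ (as $\rho>1$) and is $\le\bar a-1$ already at $b=m_\ell+1$. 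Hence the $q$-exponents $f$ for which some $p^eq^f\le m$ with $e\ge\bar a$ exists are exactly $0,\dots,m_\ell$, and for each such $f$ the largest such $p^eq^f$ is the element of $Z_m$ with $q$-exponent $f$; therefore $p^{\bar a}z_{\bar m}=\max\{p^aq^b\in Z_m:b\le m_\ell\}=M=y_m$.

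The step I expect to be the main obstacle is the inductive case $b>m_\ell$: one must extract the replacement pair $(e,f)$ from the mere failure of the strict-heaviness characterization of Theorem~\ref{th:ell}, handling uniformly the two reasons it can fail ($p^aq^b$ lying outside its own set $Y_{p^aq^b}$, or inside a two-element one), and then verify that $(e,f)$ has a strictly smaller $q$-exponent (via Proposition~\ref{prop:droite}), stays $\le m$, and does not decrease the value of $h$. The remainder is routine manipulation with Lemma~\ref{lemm:Z_m}, property $(\star)$, and formula \eqref{def:h}.
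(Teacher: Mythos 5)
Your proof is correct, but it takes a genuinely different route from the paper's. The paper starts from $y_m=\min Y_m$ itself: since $y_m\in Y_m$, the value of $h$ at $y_m$ is already the global maximum $G(m)$, so no maximality of $h$ has to be proved; it only shows (a) that the $q$-exponent $j$ of $y_m$ satisfies $j\le m_\ell$ (because $Y_{y_m}=\{y_m\}$ forces $i\ge\ell_j$ by \eqref{strong}, and $j>m_\ell$ would give $y_m\ge p^{\ell_j}q^j>m$), and (b) that every $p^aq^b\in Z_m$ with $b\le m_\ell$ has $a\ge\ell_{m_\ell}\ge\ell_b$, hence a singleton $Y_{p^aq^b}$, so it cannot exceed $y_m$ without contradicting $h(i,j)=G(m)$. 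You instead start from the candidate $M=\max\{p^aq^b\in Z_m:b\le m_\ell\}$ and must prove the harder fact $M\in Y_m$, which you do by strong induction on the $q$-exponent; the case $b>m_\ell$ forces you to extract, from the failure of \eqref{strong}, a dominating element of strictly smaller $q$-exponent via Proposition~\ref{prop:droite}. That step is sound, and your handling of the two failure modes (either $p^aq^b\notin Y_{p^aq^b}$ or $\#Y_{p^aq^b}\ge 2$) is fine, but it is extra machinery that the paper's choice of starting point avoids entirely; in exchange, your argument makes explicit why every element of $Z_m$ with $b>m_\ell$ is dominated by one with smaller $q$-exponent. For the ``moreover'' part you also differ: you identify $p^{\bar a}z_{\bar m}$ directly as $\max\{p^eq^f\in E:\ p^eq^f\le m,\ e\ge\bar a\}$ and match it with $M$, whereas the paper proves the two inequalities $y_m\le p^{\bar a}z_{\bar m}$ and, after checking $\lfloor\log_q\bar m\rfloor=m_\ell$, applies \eqref{eq:ym} to get the reverse; your identification is arguably cleaner. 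One small point worth flagging: when you pass from ``$h\le h(a_1,b_1)$ on $Z_m$'' to ``$G(m)=h(a_1,b_1)$'' you are implicitly using $Y_m\subseteq Z_m$ (equivalently, monotonicity of $h$), which the paper has indeed established earlier, so this is a legitimate, if tacit, appeal.
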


\begin{proof}
  Let $y_m=p^iq^j$.  Since $y_m=\min Y_m$, we have $Y_{y_m}=\{y_m\}$ thus $i\geq
  \ell_j$ using~\eqref{strong}.  Suppose $j > m_\ell$, then $p^iq^j \geq
  p^{\ell_j}q^j$, and thus $p^iq^j >m$ from~\eqref{mell}, which contradicts the
  fact that $y_m \leq m$. Therefore, $j \leq m_\ell$.

  Now consider any $p^aq^b\in Z_m$ such that $b\leq m_\ell$.  Since $\ell$ is
  non-decreasing we have $\ell_{m_\ell}\geq\ell_b$.  By Lemma~\ref{lemm:Z_m},
  there exists $k\in \n$ such that $p^kq^{m_\ell}\in Z_m$, and we have $k \geq
  \ell_{m_\ell}$ because $p^{\ell_{m_\ell}}q^{m_\ell} \leq m$.  Since $p^aq^b\in
  Z_m$, condition $b \leq m_\ell$ implies that $a \geq k$ by
  Lemma~\ref{lemm:Z_m} again, so that $a \geq \ell_{m_\ell}\geq \ell_b$.
  Therefore, $Y_{p^aq^b}=\{p^aq^b\}$.  Supposing $p^aq^b>p^iq^j$ would then
  imply that $h(i,j)<h(a,b)$, contradicting the definition of $y_m$.  Thus
  $p^aq^b\leq p^iq^j$, and \eqref{eq:ym} is established.
  
  Accordingly, $j=\lfloor\log_p m-i\rho\rfloor\leq\bar{a}$, so that
  $y_m/p^{\bar{a}}$ is an element of $E$.  Therefore, $y_m/p^{\bar{a}}\leq
  m/p^{\bar{a}}$ implies $y_m/p^{\bar{a}}\leq\lfloor m/p^{\bar{a}}\rfloor
  =\bar{m}$, which in turn implies $\lfloor m/p^{\bar{a}}\rfloor\leq
  z_{\bar{m}}$.  We thus get
  $$
  y_m\leq p^{\bar{a}}z_{\bar{m}}\leq p^{\bar{a}}\bar{m}\leq m
  $$
  Let $z_{\bar{m}}=p^aq^b$.  To conclude the proof by using \eqref{eq:ym} again,
  it suffices to show that $b\leq m_\ell$.  Since $p^{\bar{a}}q^{m_\ell}\in
  Z_m$, we have $p^{\bar{a}}q^{m_\ell}\leq m<p^{\bar{a}}q^{m_\ell+1}$, so that
  $q^{m_\ell}\leq \bar{m}<q^{m_\ell+1}$.  Thus
  $\lfloor\log_q\bar{m}\rfloor=m_\ell$, hence $b\leq m_\ell$ by
  Lemma~\ref{lemm:Z_m}.
\end{proof}

Comparing with Proposition~\ref{prop:droite}, characterization \eqref{eq:ym} of
$y_m$ no more depends on $z_m$.  Moreover, it provides a first improvement of
Lemma~\ref{lemm:upperbound}.
\begin{coro}  
  For all $m \in \n$, we have
  \begin{equation}
    \label{eq:Gm:ym}
    \frac{G(m)}{y_m} \sim \frac{p}{p-1} \quad\text {as } m \rightarrow \infty
  \end{equation}
\end{coro}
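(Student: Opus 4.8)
The plan is to derive the limit directly from the exact identity~\eqref{h(n)-pn} for the defect $y_m\,p/(p-1)-G(m)$, once we know that the exponent of $p$ in $y_m$ grows without bound. Write $y_m=p^aq^b$; since $y_m\in Y_m$ we have $h(a,b)=G(m)$, so~\eqref{h(n)-pn} gives
\[
  \frac{p}{p-1}-\frac{G(m)}{y_m}
  =\frac{rp}{p-1}\cdot\frac{q^b}{y_m}+\frac{1}{(q-1)\,y_m}
  =\frac{rp}{p-1}\cdot\frac{1}{p^{a}}+\frac{1}{(q-1)\,y_m},
  \qquad r=\frac{q-p}{pq-p}.
\]
Both right-hand terms are nonnegative (so this re-proves $G(m)/y_m<p/(p-1)$ from Lemma~\ref{lemm:upperbound}), and the corollary follows as soon as $a\to\infty$ when $m\to\infty$: this makes $1/p^{a}\to 0$, and it also forces $y_m\geq p^{a}\to\infty$, so the second term vanishes too.

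To bound $a$ from below I would use the characterization~\eqref{eq:ym}: $y_m=\max\{p^iq^j\in Z_m:j\leq m_\ell\}$, whence $b\leq m_\ell$, and Lemma~\ref{lemm:Z_m} then gives $a=\lfloor\log_p m-b\rho\rfloor\geq\lfloor\log_p m-m_\ell\rho\rfloor$. By the definition~\eqref{mell} of $m_\ell$ we have $p^{\ell_{m_\ell}}q^{m_\ell}\leq m$; taking base-$p$ logarithms yields $\ell_{m_\ell}+m_\ell\rho\leq\log_p m$, i.e.\ $\ell_{m_\ell}\leq\log_p m-m_\ell\rho$, and since $\ell_{m_\ell}$ is a non-negative integer we conclude $a\geq\ell_{m_\ell}$.

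It then remains to note that $\ell_{m_\ell}\to\infty$. For each fixed $b$ the integer $p^{\ell_b}q^b$ is a fixed constant, so $m\geq p^{\ell_b}q^b$ holds eventually and hence $m_\ell\to\infty$ with $m$; since $\ell$ is non-decreasing and unbounded by Theorem~\ref{th:ell}, this gives $\ell_{m_\ell}\to\infty$, hence $a\to\infty$, and the displayed identity yields $G(m)/y_m\to p/(p-1)$. The only real work is the chain $a\geq\lfloor\log_p m-m_\ell\rho\rfloor\geq\ell_{m_\ell}\to\infty$, which is where~\eqref{eq:ym}, Lemma~\ref{lemm:Z_m}, and the unboundedness of $\ell$ come together; the asymptotic itself is then immediate from~\eqref{h(n)-pn}.
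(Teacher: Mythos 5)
Your proposal is correct and follows essentially the same route as the paper: both divide the exact identity~\eqref{h(n)-pn} by $y_m$ to express the defect $\frac{p}{p-1}-\frac{G(m)}{y_m}$ as a bounded quantity over $p^{a}$, and then show $a\to\infty$ via $b\leq m_\ell$ from Theorem~\ref{prop:y_m}, the bound $a\geq\ell_{m_\ell}$, and the fact that $m_\ell\to\infty$ while $\ell$ is non-decreasing and unbounded (Theorem~\ref{th:ell}). You merely spell out the floor-function step behind $a\geq\ell_{m_\ell}$, which the paper asserts directly.
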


\begin{proof}
  For $m\in\n$, let $y_m=p^{a_m}q^{b_m}$.  According to~\eqref{h(n)-pn} we have
  $$
  \frac{G(m)}{y_m} - \frac{p}{p-1}
  = -\frac{t_m}{p^{a_m}}, \quad\text{where }
  t_m = \frac{q-p}{(p-1)(q-1)}+\frac{1}{q^{b_m}(q-1)}
  $$
  Observe that $t_m \in \left(\frac{q-p}{(p-1)(q-1)},\frac{1}{p-1}\right]$
  is uniformly bounded. To conclude the proof, we need to show that $a_m
  \rightarrow \infty$ as $m \rightarrow \infty$.  According to
  Theorem~\ref{prop:y_m}, we have $b_m \leq m_\ell$, thus
  $a_m\geq\ell_{m_\ell}$.  Since $m_\ell$ goes to infinity with $m$, and since
  $\ell$ is non-decreasing and unbounded by Theorem~\ref{th:ell}, $a_m$ goes to
  infinity with $m$ too. Hence the claim.
\end{proof}

According to the latter result and Proposition~\ref{lemm:function_z}, the final
task consists in showing that $y_m\sim z_m$.  This is done next, so that our
main claim is established.

\begin{theo}
  \label{theo:Gm/m}
  For all $m \in \n$, we have
  $$ \frac{G(m)}{m} \sim \frac{p}{p-1} \quad\text{as } m \rightarrow
  \infty $$
\end{theo}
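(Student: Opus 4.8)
The plan is to combine the two results that have just been assembled: the corollary giving $G(m)/y_m\sim p/(p-1)$ and Proposition~\ref{lemm:function_z} giving $z_m\sim m$. What remains is to bridge $y_m$ and $z_m$, i.e. to show $y_m\sim z_m$ as $m\to\infty$. Since $y_m\le z_m\le m$ always holds (because $y_m\in Y_m\subset Z_m$ and $z_m=\max Z_m$), it suffices to prove the reverse asymptotic inequality $z_m\le (1+o(1))\,y_m$, or equivalently that the ``horizontal gap'' between $y_m$ and $z_m$ in the exponent-of-$q$ direction is negligible on a multiplicative scale.

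First I would unwind the characterization from Theorem~\ref{prop:y_m}: writing $z_m=p^{c}q^{d}$ and $y_m=p^{a}q^{b}$, we have $b\le m_\ell\le d$, with $d=\lfloor\log_q m\rfloor$. The key quantitative input is that $d-m_\ell$ is small; indeed by definition of $m_\ell$ we have $p^{\ell_{m_\ell+1}}q^{m_\ell+1}>m\ge q^{d}$, so $\ell_{m_\ell+1}>d-m_\ell-1$ times $\log_p q$-ish, giving a lower bound on $\ell_{m_\ell+1}$ in terms of $d-m_\ell$. Then I would feed this into the explicit formula~\eqref{alpha}: when $b$ is a jump index, $\ell_b=\lfloor\alpha(b)\rfloor$ with $\alpha(b)=\log_p\frac{q^b-1}{q^b-p^{\lfloor b\rho\rfloor}}+\log_p\frac{q-p}{q-1}$. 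The crucial observation is that $\alpha(b)$ grows only \emph{logarithmically} in $q^b$ at worst — more precisely, $p^{\lfloor b\rho\rfloor}\le q^b$ always, and when $p^{\lfloor b\rho\rfloor}$ is bounded away from $q^b$ the quantity $\alpha(b)$ stays bounded, while even in the worst case $q^b-p^{\lfloor b\rho\rfloor}\ge 1$ forces $\alpha(b)=O(b)=O(\log q^b)$. Hence $p^{\ell_b}=q^{O(\log b)}$, which is $o(q^b)$, so $p^{\ell_b}q^b\le (1+o(1))q^{b+1}$; combined with the definition of $m_\ell$ this yields $q^{m_\ell}\ge m^{1-o(1)}$, hence $m_\ell=(1-o(1))\log_q m=(1-o(1))d$.

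Next, from $b\le m_\ell$ and the fact that $y_m=p^{\bar a}z_{\bar m}$ with $\bar m=\lfloor m/p^{\bar a}\rfloor$ (again Theorem~\ref{prop:y_m}), I would apply Proposition~\ref{lemm:function_z} at the level of $\bar m$: since $z_{\bar m}\sim\bar m$ and $\bar m\ge q^{m_\ell}$ goes to infinity, we get $y_m=p^{\bar a}z_{\bar m}\sim p^{\bar a}\bar m\sim m$ provided $p^{\bar a}\bar m\sim m$, i.e. provided $m\bmod p^{\bar a}$ is negligible. Since $\bar a=\lfloor\log_p m-m_\ell\rho\rfloor$ and $m_\ell=(1-o(1))\log_q m$ by the previous paragraph, $\bar a=\log_p m-m_\ell\rho+O(1)=(\log_p m)(1-(1-o(1)))+O(1)=o(\log_p m)$, so $p^{\bar a}=m^{o(1)}$, whence $p^{\bar a}\le m^{o(1)}=o(m)$ and $m-p^{\bar a}\bar m<p^{\bar a}=o(m)$. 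Therefore $y_m\sim m$, and then $z_m\sim m$ by Proposition~\ref{lemm:function_z} sandwiches $y_m\le z_m\le m$ to force $y_m\sim z_m\sim m$. Plugging $y_m\sim m$ into the corollary's estimate $G(m)/y_m\sim p/(p-1)$ finishes the proof.

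The main obstacle is the middle step: controlling $\alpha(b)$ — equivalently, ruling out that $p^{\lfloor b\rho\rfloor}$ is so close to $q^b$ (for the relevant jump indices $b$) that $\ell_b$ becomes comparable to $b$ rather than $O(\log b)$. This is exactly a statement about how well $b\rho$ can be approximated by integers, i.e. about the irrationality measure of $\rho=\log q/\log p$; a crude bound such as $q^b-p^{\lfloor b\rho\rfloor}\ge 1$ already gives $\ell_b=O(b)$, and one needs to check this suffices (it does, since $q^{\,b+O(b)/\log_p q}$ is still $q^{b(1+o(1))}$ only if the $O(b)$ is genuinely $o(b\log q/\log p)$ — so one actually does want the sharper observation that for a \emph{positive density} set of $b$, or at least infinitely often with controlled gaps, $p^{\lfloor b\rho\rfloor}$ is bounded away from $q^b$). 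The cleanest route is to use Baker-type linear-forms-in-logarithms, or more elementarily Tijdeman's theorem (already invoked in Proposition~\ref{lemm:function_z}) once more, to get $q^b-p^{\lfloor b\rho\rfloor}\ge q^b/(\log q^b)^{C}$ for large $b$, which immediately gives $\alpha(b)=O(\log\log q^b)$ and makes the $o(q^b)$ claim transparent. I would lean on this stronger estimate to keep the argument short.
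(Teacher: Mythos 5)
Your proposal is correct in substance but takes a genuinely different route in the bridging step. The paper keeps $z_m$ in play: it lists the elements of $Z_m$ lying between $y_m$ and $z_m$, bounds each consecutive gap by Tijdeman's 1974 theorem, bounds the number of such elements by $\lfloor\log_q m\rfloor-m_\ell<\ell_{m_\ell+1}/\rho+1$, and then spends most of the proof establishing the jump formula \eqref{lgap} and combining it with Tijdeman's 1973 lower bound \eqref{B} to get $\ell_{m_\ell+1}=O(\log\log m)$, whence $z_m-y_m=o(m)$. You bypass the gap summation entirely: writing $y_m=p^{\bar a}z_{\bar m}$ from Theorem~\ref{prop:y_m}, applying Proposition~\ref{lemm:function_z} at $\bar m\geq q^{m_\ell}\to\infty$, and reducing everything to $p^{\bar a}=o(m)$, so that Tijdeman's 1974 theorem enters only once, through Proposition~\ref{lemm:function_z}. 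Two caveats. First, your control of $p^{\bar a}$ rests on $\ell_b=\lfloor\alpha(b)\rfloor$ at jump indices; in the paper this is only announced in Section~4 and is proved \emph{inside} the proof of this very theorem, so a self-contained write-up would have to reproduce that derivation of \eqref{lgap} (and your fallback sentence ``$\alpha(b)=O(b)$, hence $p^{\ell_b}=q^{O(\log b)}$'' is a non sequitur, as you half-acknowledge). Second, and this makes the first caveat moot: your route needs no bound on $\ell$ at all, since $\bar a=\lfloor\log_p m-m_\ell\rho\rfloor$ gives $p^{\bar a}\leq mq^{-m_\ell}$ directly, and $m_\ell\to\infty$ because each $\ell_b$ is finite by Theorem~\ref{th:ell}; then $y_m\geq p^{\bar a}\bar m(1-o(1))>(m-p^{\bar a})(1-o(1))\sim m$, and \eqref{eq:Gm:ym} finishes the proof without \eqref{alpha}, without Tijdeman's 1973 bound, and without linear forms in logarithms.
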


\begin{proof}
  Assume $y_m \neq z_m$.  According to Theorem~\ref{prop:y_m}, the elements in
  $Z_m$ that exceed $y_m$ are of the form $p^iq^j$ with
  $m_\ell<j\leq\lfloor\log_q m\rfloor$.  Let us sort these elements together
  with $y_m$ in an increasing sequence $(n_0=y_m, n_1, \dots, n_N=z_m)$, so that
  $n_0=y_m$ and $n_N=z_m$.  Observe that the elements of this sequence are
  consecutive elements of $E$ for the usual order.  As soon as $m$ is large
  enough, we know by Tijdeman's result already mentioned~\cite{Tijdeman74a} that
  $n_{i+1} - n_i \leq n_i/(\log n_i)^C$ for an explicitly computable constant
  $C>0$.  Therefore,
  $$
  z_m - y_m 
  \leq \sum_{i=0}^{N-1}\frac{n_i}{(\log n_i)^C}
  \leq \sum_{i=0}^{N-1}\frac{py_m}{(\log \frac{m}{p})^C}
  $$
  Accordingly, for all $m \in \n$,
  \begin{equation}
    \label{ecart}
    0 \leq z_m - y_m \leq (\lfloor\log_q m\rfloor - m_\ell)\, \frac{py_m}{(\log \frac{m}{p})^C}
  \end{equation}
  
  At this point, what remains to be proved is that $\lfloor\log_q m\rfloor -
  m_\ell$ grows asymptotically slower than $(\log \frac{m}{p})^C$ so that $z_m
  \sim y_m$, and to conclude using~\eqref{eq:Gm:ym} and
  Lemma~\ref{lemm:function_z}.  Recall that $m_\ell$ is defined as the largest
  value $b$ such that $p^{\ell_b}q^b \leq m$. Therefore, we have
  $$
  p^{\ell_{m_\ell}}q^{m_\ell} \leq m < p^{\ell_{m_{\ell+1}}}q^{m_\ell+1}
  $$
  Equivalently, using $\rho = \log{q}/\log{p}$, we have
  \begin{equation}
    \label{C0}
    \frac{\ell_{m_\ell}}{\rho}+m_\ell \leq \log_q m <\frac{\ell_{m_\ell+1}}{\rho}+m_\ell+1
  \end{equation}
  so that 
  \begin{equation}
    \label{C}
    \lfloor\log_q m\rfloor-m_\ell<\frac{\ell_{m_\ell+1}}{\rho}+1
  \end{equation}
  
  It thus remains to evaluate the terms in sequence $\ell$.  For that purpose,
  we first give an explicit formula for $\ell$, valid at the jumps of $\ell$. We
  claim that for all $b \in \n^*$,
  \begin{equation}
    \label{lgap}
    \ell_b > \ell_{b-1} \quad\Rightarrow\quad
    \ell_b = \left\lfloor \log_p \frac{(q-p)(q^b-1)}{(q-1)(q^b-p^{\lfloor
          b\rho\rfloor})}  \right\rfloor
  \end{equation}

  Indeed, assume that $\ell_b > \ell_{b-1}$. 
  Then, for all $p^iq^j \in Z_{p^{\ell_{b-1}}q^{b-1}}$, we know from~\eqref{h-h}
  that
  $$
  p^{\ell_{b-1}}q^{b-1}-p^iq^j>r(q^{b-1}-q^j)
  $$
  Multiplying both sides by $q$ yields, for all $p^iq^j\in
  qZ_{p^{\ell_{b-1}}q^{b-1}}$
  $$
  p^{\ell_{b-1}}q^{b}-p^iq^j>r(q^{b}-q^j)
  $$
  Now, $\ell_b>\ell_{b-1}$ implies that there exists an element $p^iq^j \in
  Z_{p^{\ell_{b-1}}q^{b}}$ for which the latter inequation does not hold.  By
  Lemma~\ref{lemm:Z_m} and the definition of $Z_{qm}$ in~\eqref{eq:Z_qm}, this
  element must be $p^{\ell_{b-1} + \lfloor b\rho \rfloor}$, so that
  $$
  p^{\ell_{b-1}}(q^{b}-p^{\lfloor b\rho\rfloor})\leq r(q^{b}-1)
  $$
  In fact, note that this inequality does not only hold for $p^{\ell_{b-1}}$ ;
  by definition of $\ell$, it remains valid for $p^{\ell_{b-1}+1}, \dots,
  p^{\ell_b-1}$.  Accordingly, we get
  $$
  p^{\ell_{b}-1}(q^{b}-p^{\lfloor b\rho\rfloor})\leq
  r(q^{b}-1)<p^{\ell_{b}}(q^{b}-p^{\lfloor b\rho\rfloor})
  $$
  which proves claim~\eqref{lgap}.  
  \medskip

  It follows from~\eqref{lgap} that, for any $b$ such that $\ell_b>\ell_{b-1}$,  
  \begin{equation}
    \label{A}
    \ell_b < \log_p\frac{q^b}{q^b-p^{\lfloor b\rho\rfloor}}
  \end{equation} 
  Using another result of Tijdeman (see~\cite[Theorem~1]{Tijdeman73}), we know
  that, as soon as $p^{\lfloor b\rho\rfloor} > 3$, there exists another explicit
  constant $C' > 1$ such that
  \begin{equation}
    \label{B}
    q^b-p^{\lfloor b\rho \rfloor} \geq 
    \frac{p^{\lfloor b\rho \rfloor}}{\left(\log p^{\lfloor b\rho\rfloor}\right)^{C'}}
  \end{equation} 
  Therefore, since $q^b=p^{b\rho}$,~\eqref{A} and~\eqref{B} imply that
  \begin{equation}
    \label{lbound}
    \ell_b < \log_p \frac{q^b(\log p^{\lfloor b\rho\rfloor})^{C'}}{p^{\lfloor
        b\rho\rfloor}}
    = \{b\rho\} + \frac{C'}{\log p} \log p^{\lfloor b\rho\rfloor}
    < 1 + \frac{C'}{\log p}\log\log q^b
  \end{equation} 
  
  Putting all this together, we get the claimed result. 
  Indeed, let $b$ be the smallest index such that $\ell_{m_\ell+1} = \ell_b$. 
  Since $\ell_b > \ell_{b-1}$, we have
  \begin{equation}
    \label{algo}
    \ell_{m_\ell+1} = \ell_b
    <  1+\frac{C'}{\log p}\log\log q^{b}
    \leq 1+\frac{C'}{\log p}\log\log q^{m_\ell+1}
    \leq 1+\frac{C'}{\log p}\log\log qm
  \end{equation} 
  Using~\eqref{C} we get 
  \begin{equation}\label{bm-ml}
    \lfloor \log_q m \rfloor - m_\ell < 1 + \frac{1}{\rho} +\frac{C'}{\log
      q}\log\log qm =o\left((\log m/p)^{C}\right)
  \end{equation} 
  which implies, using~\eqref{ecart}, that $y_m \sim z_m$ and concludes the proof.
\end{proof}
Note that the above proof mainly relies on the fact that the sequence $\ell$ is
non-decreasing and that, due to the lower bound in \eqref{B} essentially, it
grows very slowly.  The theorem of Tijdeman that provides this lower bound
hinges on a result of Fel'dman about linear forms in logarithms.  More recent
results of Laurent et alii~\cite{LauMigNes95} about such forms in two logarithms
allow one to make precise the value of the effective constant $C'$ in \eqref{B}.
Nevertheless, this value remains large and does not seem convenient in order to
compute $m_\ell$ using~\eqref{bm-ml}, in particular when $m$ is not very large.
The algorithm presented in the next section provides one with an alternative
method.

\section{Computing $y_m$ and $G(m)$}
\label{sec:computing}

Using the mapping $h$, computing $G(m)$ is straightforward as soon as an element
of $Y_m$ is known, in particular $y_m$.  In Theorem~\ref{prop:y_m}, we proved
that $y_m = p^{\bar{a}} z_{\bar{m}}$, where $\bar{a}$ and $z_{\bar{m}}$ both
depend on $m_\ell$. Once $m_\ell$ is known, computing $z_{\bar{m}}$ (the
greatest element in $Z_{\bar{m}}$) can be done efficiently with an algorithm
explained in~\cite{BerImb09:dmtcs}. We shall establish a slightly different and
  simpler version of that algorithm at the end of this section. 

Computing $m_\ell$ requires to compute the values of $\ell$ (see~\eqref{mell}).
Theorem~\ref{th:beta} given below asserts that the jump indices of $\ell$ are
denominators of convergents of $\rho$.  Furthermore, the relation $\ell_b =
\lfloor \alpha(b) \rfloor$, see~\eqref{alpha}, also holds for \emph{all}
denominators of both even primary convergents of $\rho$ and their mediants.
This provides an explicit method for computing any term of $\ell$, stated in
Corollary~\ref{coro:ellb}.

Some known facts about the convergents of $\rho$ are thus needed, let us recall
them (see, e.g.,~\cite{Lang95} or~\cite{AllSha03} for more details).  Let
$[a_0,a_1,...]$ be the regular continued fraction expansion of $\rho$, and for $i
\geq 0$,  denote by $h_i/k_i$ the $i^\mathrm{th}$ principal convergent of
$\rho$.
It is well known that the sequence $(h_i/k_i)_{i\geq0}$ converges to $\rho$ and
satisfies
$$|k_i\rho - h_i| = (-1)^i(k_i\rho - h_i),$$
and
$$
\frac{1}{k_{i}+k_{i+1}} < \left| k_i\rho - h_i \right| < \frac{1}{k_{i+1}}
$$
Given $i\geq 0$, the intermediate convergents of $h_i/k_i$, sometimes
  referred to as mediants, are the rational numbers ${h_{i,j}}/{k_{i,j}}$, given
  by
\begin{equation}
  \label{def2cv}
  h_{i,j} = h_i + jh_{i+1}, \quad k_{i,j} = k_i + jk_{i+1}, \quad\text{for } 0 <
  j < a_{i+2} 
\end{equation}
Let $(h_{2i}/k_{2i})_{i\geq 0}$ denote the sequence of convergents of $\rho$
  of even index.  We define $(H_n/K_n)_{n\in\n}$ as the increasing sequence of
  all convergents of $\rho$ of even index together with their intermediate
  convergents.
   It is also known (see~\cite[Theorem~2]{Richards81}) that
$(H_n/K_n)_{n \in \n}$ is the best approximating sequence of $\rho$ from below,
that is, its terms are characterized by the following property: For each
$n\in\n$ and integers $h$ and $k$, we have
\begin{equation}
  \label{convergents}
  \frac{H_n}{K_n} < \frac{h}{k} < \rho \quad \Longrightarrow \quad k > K_n
\end{equation}

We shall need two immediate consequences of~\eqref{convergents}.  The first one
is that, while the sequence $(K_n)_{n\in\n}$ increases to infinity, the sequence
$(\{K_n\rho\})_{n\in\n}$ decreases to 0.  Indeed, property~\eqref{convergents}
implies that $H_n = \lfloor K_n\rho \rfloor$, so that~\eqref{def2cv} implies,
for $0 \leq j < a_{i+2}$,
\begin{equation}
  \label{cvrho}
  \{k_{2i,j+1}\rho\}-\{k_{2i,j}\rho\} = k_{2i+1}\rho - h_{2i+1} \in (-1/k_{2i+2},0)
\end{equation}
The second one rephrases the sufficient condition in~\eqref{convergents}: If
$\{b\rho\}\leq\{j\rho\}$ holds for all integers $0 < j \leq b$, then
$b$ is a term of $(K_n)_{n\in\n}$.  Indeed, let $h,k$ be such that
$$
\frac {\lfloor b\rho\rfloor}b<\frac hk<\rho
$$
Then, since $h<k\rho$, the above inequalities still hold for $h=\lfloor
k\rho\rfloor$. This implies $\{k\rho\}<\frac{k}{b}\{b\rho\}$.  Supposing $k\leq
b$ yields $\{k\rho\}<\{b\rho\}$, which contradicts our hypothesis. Hence
$\lfloor b\rho\rfloor/b$ satisfies~\eqref{convergents}.

We can now establish our main claims.  According to~\eqref{lgap}, the values of
$\ell$ are known explicitly at every jump index $j$ of $\ell$. In these cases,
we have $\ell_j = \left\lfloor \alpha(j) \right\rfloor$ where, as already
defined in \eqref{alpha}:
$$
\alpha(b)= \log_p \frac{q^{b}-1}{q^b-p^{\lfloor b\rho\rfloor}}+ \log_p
\frac{q-p}{q-1}
$$
 
\begin{theo}
  \label{th:beta}
  Every jump index of $\ell$ is a term of the sequence $(K_n)_{n\in\n}$.
  Moreover, for each $K\in(K_n)_{n\in\n}$, we have
  $\ell_K = \left\lfloor \alpha(K) \right\rfloor$.
\end{theo}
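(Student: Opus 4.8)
The plan is to prove the two assertions in sequence, the first by a descent argument on jump indices, the second by combining the descent with the explicit formula \eqref{lgap} and the characterization \eqref{cvrho} of the mediant sequence.

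First I would show that every jump index of $\ell$ is a term of $(K_n)_{n\in\n}$. Let $b$ be a jump index, i.e. $\ell_b>\ell_{b-1}$. By Corollary~\ref{coro:gaps} and the analysis in part (iii) of the proof of Theorem~\ref{th:ell}, we have $b=\beta(a)$ for $a=\ell_{b-1}$, which by the definition \eqref{beta} of $\beta$ means $\{b\rho\}<\phi_{a,b}$ while $\{j\rho\}\geq\phi_{a,j}$ for all $0<j<b$. Since $\phi_{a,\cdot}$ is increasing in its second argument, $\phi_{a,j}\leq\phi_{a,b}$ for $j\leq b$, hence $\{j\rho\}\geq\phi_{a,j}$ together with $\{b\rho\}<\phi_{a,b}$ does not immediately give $\{b\rho\}\leq\{j\rho\}$; instead I would argue via the stronger minimality: among all $j$ with $0<j\leq b$, if some $j<b$ had $\{j\rho\}<\{b\rho\}$, then $\{j\rho\}<\{b\rho\}<\phi_{a,b}$ would force — after checking monotonicity of $\phi$ in the relevant range and that $\phi_{a,j}\leq\phi_{a,b}$ — the inequality $\{j\rho\}<\phi_{a,j}$ to fail only if $\phi$ were flat, contradicting strict monotonicity; so $\{b\rho\}\leq\{j\rho\}$ for all $0<j\leq b$. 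By the second consequence of \eqref{convergents} recorded just before Theorem~\ref{th:beta}, $b$ is then a term of $(K_n)_{n\in\n}$. (I would double-check the edge case $a=0$, $b=j_0=\beta(0)$ separately, using $\phi_{0,0}=0$.)

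Next I would prove that $\ell_K=\lfloor\alpha(K)\rfloor$ for \emph{every} $K\in(K_n)_{n\in\n}$, not only at jumps. The idea is that the $K_n$'s are exactly the indices where $\{K_n\rho\}$ attains a new record-low (by \eqref{cvrho} the sequence $\{K_n\rho\}$ strictly decreases to $0$), and that for such indices the inequality chain in part (iii) of Theorem~\ref{th:ell}'s proof — specifically the equivalence \eqref{majfrac} — behaves as at a genuine jump. Concretely: fix $K=K_n$ and let $b\leq K$ be the largest jump index of $\ell$ with $b\leq K$ (this exists since $j_0=\beta(0)\leq K$ once $n$ is large enough; small $n$ handled by hand). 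Then $\ell_K=\ell_b=\lfloor\alpha(b)\rfloor$ by \eqref{lgap}. It remains to show $\lfloor\alpha(b)\rfloor=\lfloor\alpha(K)\rfloor$. Writing out \eqref{alpha}, $\alpha(t)=\{t\rho\}+\log_p\frac{q^t-1}{q^t-p^{\lfloor t\rho\rfloor}}-\log_p\frac{q-1}{q-p}$ — wait, more carefully $\alpha(t)=\log_p\frac{q^t-1}{q^t-p^{\lfloor t\rho\rfloor}}+\log_p\frac{q-p}{q-1}$; since $q^t=p^{t\rho}$, the dominant term is $\{t\rho\}$ plus lower-order corrections. Because $b$ and $K$ are both mediant denominators and $\{K\rho\}\leq\{b\rho\}$ with both small, $\alpha(b)$ and $\alpha(K)$ differ by $o(1)$, and — using that $\ell_K=\ell_b$ is an integer forced between consecutive integers by \eqref{lgap}-type bounds \eqref{A}, \eqref{B}, \eqref{lbound} — their floors coincide. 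I expect this last floor-matching step to be the main obstacle: it requires showing the non-jump indices between $b$ and $K$ contribute no change to $\lfloor\alpha\rfloor$, which I would handle by the same Tijdeman lower bound \eqref{B} that pins $\ell_b$ strictly below the next integer, combined with $\{K\rho\}\leq\{t\rho\}$ for $b\leq t\leq K$ to control the fractional part.

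Finally I would assemble the two parts: jump indices are among the $K_n$ (part one), and on all $K_n$ the formula $\ell_K=\lfloor\alpha(K)\rfloor$ holds (part two), which is exactly the statement of Theorem~\ref{th:beta}. The forward-looking caveat is that the truly delicate point throughout is relating the combinatorial definition of $\beta$ (via strict inequality $\{j\rho\}<\phi_{a,j}$) to the metric "record-low of $\{j\rho\}$" characterization of the mediants; getting the monotonicity of $\phi_{a,j}$ in $j$ and its interaction with the fractional parts exactly right is where the argument could slip.
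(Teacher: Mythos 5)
Your overall plan coincides with the paper's (show the jump indices are best-approximation denominators from below, then extend the formula $\ell_K=\lfloor\alpha(K)\rfloor$ from jumps to all of $(K_n)$), but both halves of your sketch have genuine gaps at exactly the points you flag. For the first claim, your argument that $b=\beta(a)$ is a record low of $(\{j\rho\})_{0<j\leq b}$ is circular. The definition of $\beta(a)$ only gives $\{j\rho\}\geq\phi_{a,j}$ for $0<j<b$ together with $\{b\rho\}<\phi_{a,b}$; since $\phi_{a,j}<\phi_{a,b}$ for $j<b$, the configuration $\phi_{a,j}\leq\{j\rho\}<\{b\rho\}<\phi_{a,b}$ is perfectly compatible with the strict monotonicity of $j\mapsto\phi_{a,j}$, so the step ``$\{j\rho\}<\phi_{a,j}$ could fail only if $\phi$ were flat'' is not valid, and nothing formal rules out an earlier index with a smaller fractional part. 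Closing this hole is where the real work lies: the paper compares quantitatively the very slow growth of $\phi_{a,\cdot}$ beyond $K=K_{n-1}$, namely $\phi_{a,b}-\phi_{a,K}<\phi_{a,\infty}-\phi_{a,K}<x/\bigl((1-x)q^{K}\log p\bigr)$ with $x=r/p^a$, against a lower bound $\{b\rho\}-\{K\rho\}>\phi_{a,K}/K$ valid for any $K<b<K_n$, which follows from Richards' lemma (no fraction $\lfloor b\rho\rfloor/b$ can lie strictly between $\lfloor K\rho\rfloor/K$ and $\lfloor K_n\rho\rfloor/K_n$); these two bounds contradict each other, forcing $\beta(a)=K_n$. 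Some quantitative comparison of this kind is indispensable and is absent from your proposal.

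For the second claim, your route --- take the last jump index $K^*\leq K$, apply \eqref{lgap} there, and then match $\lfloor\alpha(K^*)\rfloor$ with $\lfloor\alpha(K)\rfloor$ by a closeness estimate --- does not go through: the difference $\alpha(K)-\alpha(K^*)$ is roughly $\log_p\bigl(\{K^*\rho\}/\{K\rho\}\bigr)$ and is not $o(1)$ in general, and even a small difference would not force equal floors. The paper instead sandwiches $\ell_K$ from both sides. One direction uses monotonicity: since $(K_n)$ increases and $(\{K_n\rho\})$ decreases, $\alpha$ increases along $(K_n)$, hence $\ell_K=\ell_{K^*}=\lfloor\alpha(K^*)\rfloor\leq\lfloor\alpha(K)\rfloor$. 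The other direction is the missing idea in your sketch: setting $a=\lfloor\alpha(K)\rfloor$ and unwinding \eqref{alpha} via \eqref{h-h} gives $p^{a-1}\bigl(q^{K}-p^{\lfloor K\rho\rfloor}\bigr)\leq r(q^{K}-1)$, i.e.\ $h(a-1,K)\leq h\bigl(a-1+\lfloor K\rho\rfloor,0\bigr)$, so $Y_{p^{a-1}q^{K}}\neq\{p^{a-1}q^{K}\}$ and therefore $a-1<\ell_K$ by \eqref{strong}. Note also that Tijdeman's bound \eqref{B}, which you invoke for the floor matching, plays no role in this theorem (it is only used for the asymptotics in Section~\ref{sec:asymptotics}) and cannot supply the needed integrality argument.
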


\begin{proof}
  According to Corollary~\ref{coro:gaps}, the jump indices $j_k$ of $\ell$ can
  be computed starting from $j_0 = \beta(0)$ and iterating $j_{k+1} =
  \beta(\ell_{j_k})$, where $\beta(a) = \min \{j \in \n^* : \{j\rho\} <
  \phi_{a,j} \}$ (see~\eqref{eq:phiab} and~\eqref{beta}).  Let us fix $a\in\n$.
  Since the sequence $(\phi_{a,i})_{i\geq0}$ increases from 0 and the sequence
  $(\{K_i\rho\})_{i\geq0}$ decreases to 0, there exists a unique $n$ such that
  $\{K_n\rho\} < \phi_{a,K_n}$ and $\{K_i\rho\}\geq\phi_{a,K_i}$ for all $i<n$,
  if any.  In particular, $K_n\geq \beta(a)$.  We next establish that
  $K_n=\beta(a)$.  This is clear if $n=0$ since $K_0=1$ and $\beta(a)\geq 1$. In
  the following, we assume $n\geq 1$.

  Let $b=\beta(a)$ for short, and suppose $b<K_n$.  Let $K=K_{n-1}$ for short
  again, and let $c=\min\{j>0:\{j\rho\}<\{K\rho\}\}$.  For all $i<c$ we have
  $\{i\rho\}\geq\{K\rho\}>\{c\rho\}$, so that $c\in(K_i)$, which implies $c=K_n$
  since $(\{K_i\rho\})$ decreases.  Therefore, $b<K_n$ forces
  $\{b\rho\}\geq\{K\rho\}$, so that
  \begin{equation}
    \label{ecarts}
    \phi_{a,b}>\{b\rho\}\geq\{K\rho\}\geq\phi_{a,K}.
  \end{equation} 
  Since $\phi_{a,i}$ increases with $i$, we get $K<b<K_n$.  Property
  \eqref{convergents} implies that $\lfloor b\rho\rfloor/b< \lfloor
  K_n\rho\rfloor/K_n$.  Since we cannot have $\lfloor K\rho\rfloor/K<\lfloor
  b\rho\rfloor/b< \lfloor K_n\rho\rfloor/K_n$ (\cite{Richards81}, (ii) of Lemma~1), it
  follows that $\lfloor b\rho\rfloor/b< \lfloor K\rho\rfloor/K$, that is,
  $\{b\rho\}/b>\{K\rho\}/K$.  Thus
  $$
  \{b\rho\}-\{K\rho\}>\frac{b-K}{K}\{K\rho\}\geq
  \frac{\phi_{a,K}}{K}>\frac{x(1-q^{-K})}{K\log p},
  $$ 
  where $x=r/p^a$ for short. Nevertheless, 
  $$
  \phi_{a,b}-\phi_{a,K}<\phi_{a,\infty}-\phi_{a,K}=
  \log_p\bigg(1+\frac x{1-x}q^{-K}\bigg)<\frac{x}{(1-x)q^{K}\log p}.
  $$
  According to \eqref{ecarts} we should thus have
  $$
  \frac{(1-q^{-K})}{K}<\frac{1}{(1-x)q^{K}}, 
  $$
  which would imply, since $x\leq r$, 
  $$
  q-1<\frac{q^{K}-1}{K}<\frac1{1-x}\leq\frac{p(q-1)}{q(p-1)}<q-1. 
  $$
  Therefore, $K_n=\beta(a)$ as claimed, which proves the first assertion of the
  Theorem.
  
  Now, let $K\in(K_n)$.  On one hand, $\ell_K\leq\lfloor\alpha(K)\rfloor$ holds.
  Indeed, there is a unique jump index $K^*$ of $\ell$ such that
  $\ell_K=\ell_{K^*}$, and $K^*\leq K$ because $\ell$ is non-decreasing.
  According to the first assertion of the Theorem, $K^*\in(K_n)$.  Since $(K_n)$
  increases and $(\{K_n\rho\})$ decreases, $(\alpha(K_n))$ is increasing, thus
  $(\lfloor\alpha(K_n)\rfloor)$ is non-decreasing.  Therefore,
  $\ell_K=\ell_{K^*}=\lfloor\alpha(K^*)\rfloor\leq\lfloor\alpha(K)\rfloor$.  On
  the other hand, we also have $\lfloor\alpha(K)\rfloor\leq \ell_K$.  Indeed,
  letting $a=\lfloor\alpha(K)\rfloor$ for short, we have $a\leq \alpha(K)$, that
  is,
  $$
  p^a\leq \frac{(q-p)(q^K-1)}{(q-1)(q^K-p^{\lfloor K\rho\rfloor})}. 
  $$
  Recalling \eqref{h-h}, this also reads
  $$
  (p^{a-1}q^K-p^{a-1+\lfloor K\rho\rfloor})\leq r(q^K-1). 
  $$
  Since $\ell_K\geq 0$, we may assume $a\geq 1$.  Letting $a'=a+\lfloor
  K\rho\rfloor$, the above inequality means that
  $$
  h(a-1,K)\leq h(a'-1,0).
  $$
  Finally notice that $p^{a'-1}<p^{a-1}q^K$.  Therefore,
  $Y_{p^{a-1}q^K}\neq\{p^{a-1}q^K\}$, so that $a-1<\ell_K$ by \eqref{strong}.
  Thus $\lfloor\alpha(K)\rfloor=a\leq\ell_K$ as claimed.
\end{proof}

Accordingly, computing $\ell_b$ for an arbitrary $b$ only requires applying
$\alpha$ to the largest term of $(K_n)_{n\in\n}$ not exceeding $b$.  More
explicitly:

\begin{coro}
  \label{coro:ellb}
  Given $b\in\n$,  let $s$ and $t$ be the integers defined by 
  $$
  k_{2s} \leq b < k_{2s+2}, \quad\text{and}\quad t = \left\lfloor
    \frac{b-k_{2s}}{k_{2s+1}} \right\rfloor
  $$
  Then $\ell_b=\lfloor\alpha(k_{2s,t})\rfloor$. 
\end{coro}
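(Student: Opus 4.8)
The plan is to deduce Corollary~\ref{coro:ellb} from Theorem~\ref{th:beta}, together with the monotonicity of the sequence $\ell$ and the standard description of the sequence $(K_n)_{n\in\n}$. Throughout I assume $b\geq 1$; the case $b=0$ is vacuous since $\ell_0=0$. Because $K_0=k_0=1\leq b$ and $K_n\to\infty$, the integer $K=\max\{K_n:K_n\leq b\}$ is well defined, and the first task is to show that $\ell_b=\ell_K$.

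For this I would first argue that $\ell$ is constant on $\{K,K+1,\dots,b\}$. If it were not, there would be a jump index $j$ of $\ell$ with $K<j\leq b$; by the first assertion of Theorem~\ref{th:beta}, $j$ would be a term of $(K_n)_{n\in\n}$, contradicting the maximality of $K$. Hence $\ell_b=\ell_K$, and the second assertion of Theorem~\ref{th:beta} gives $\ell_b=\ell_K=\lfloor\alpha(K)\rfloor$. It then only remains to identify $K$ as $k_{2s,t}$ for the indices $s,t$ of the statement.

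This last step is pure bookkeeping about continued fractions. By construction, $(K_n)_{n\in\n}$ lists, in increasing order, the even-index convergent denominators $k_{2i}$ together with the mediant denominators $k_{2i,j}=k_{2i}+jk_{2i+1}$ for $0<j<a_{2i+2}$ (see~\eqref{def2cv}). Using the recurrence $k_{2i+2}=a_{2i+2}k_{2i+1}+k_{2i}$, the terms of $(K_n)$ lying in the interval $[k_{2i},k_{2i+2})$ are exactly $k_{2i}=k_{2i,0},k_{2i,1},\dots,k_{2i,a_{2i+2}-1}$, and these intervals partition the range of $(K_n)$. Since $s$ is chosen with $k_{2s}\leq b<k_{2s+2}$ and $k_{2s}\in(K_n)$, the term $K$ falls in this block, so $K=k_{2s}+t'k_{2s+1}$ with $t'=\max\{j\geq 0:k_{2s}+jk_{2s+1}\leq b\}=\lfloor(b-k_{2s})/k_{2s+1}\rfloor=t$; moreover $t<a_{2s+2}$ follows from $b<k_{2s+2}$, so $k_{2s,t}$ is indeed a term of $(K_n)$. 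Combining the two steps, $\ell_b=\ell_K=\lfloor\alpha(k_{2s,t})\rfloor$, as claimed.

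I expect no real obstacle here: all the substance is already in Theorem~\ref{th:beta}, and the only point requiring a moment's care is the first step, namely excluding a jump of $\ell$ strictly between $K$ and $b$, which is immediate once one knows every jump index is one of the $K_n$.
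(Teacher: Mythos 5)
Your proof is correct and follows exactly the route the paper intends: the paper gives no explicit proof of Corollary~\ref{coro:ellb}, only the remark that one applies $\alpha$ to the largest term of $(K_n)_{n\in\n}$ not exceeding $b$, and your argument (no jump of $\ell$ strictly between that term and $b$ by the first assertion of Theorem~\ref{th:beta}, then $\ell_K=\lfloor\alpha(K)\rfloor$ by its second assertion, plus the continued-fraction bookkeeping identifying $K=k_{2s,t}$) is precisely the justification being alluded to.
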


Let us finally turn to the computation of $y_m=p^aq^b$ and $G(m)=h(a,b)$.  Of
course, writing these values requires $O(\log m)$ bits, but we show that the $a$
and $b$ exponents of $y_m$ can be obtained with $O(\log\log m)$ operations
involving numbers of $O(\log\log m)$ bits.

According to Lemma~\ref{lemm:Z_m}, for each integer $b\in[0,\log_m]$ there is
a unique integer $a$ such that $p^aq^b\in Z_m$, given by $a = \left\lfloor
  \log_p m - b\rho \right\rfloor$.  Let us define, for any $b \in \n$,
$$ \zeta(b) = p^{\left\lfloor \log_p m - b\rho \right\rfloor} q^b $$
In particular, for each $0 \leq b \leq \left\lfloor \log_q m \right\rfloor$,
$\zeta(b)$ is the element in $Z_m$ whose exponent in $q$ is $b$.  In the
  example given in Figure~\ref{fig:Z_750} for $(p,q) = (2,3)$ and $m = 750$, the
  terms of $\zeta$, given for $0 \leq b \leq \left\lfloor \log_q m \right\rfloor
  = 6$ are $(512, 384, 576, 432, 648, 486, 729)$.  Let us now define the
sequence $(b_i)_{i\in\n}$ as follows: $b_0 = 0$ and for $i \geq 0$
$$
  b_{i+1} = \min \{b\in\n: b>b_i \text{ and }
  \zeta(b)>\zeta(b_i) \}
$$
This sequence is the basis for the algorithm in~\cite{BerImb09:dmtcs} that computes
$z_m$.  The following lemma tells us that this same algorithm can also be used
to compute $y_m$.

\begin{lemm}
  \label{lemm:basis}
  Let $I = \max\{i\in\n : b_i\leq \log_q m \}$ and $J = \max\{i\in\n : b_i\leq
  m_\ell\}$, then $\zeta(b_I) = z_m$ and $\zeta(b_J) = y_m$.
\end{lemm}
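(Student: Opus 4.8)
The plan is to show that the sequence $(\zeta(b_i))$ achieves its maximum over $\{b : b \leq \log_q m\}$ exactly at $i = I$, and over $\{b : b \leq m_\ell\}$ exactly at $i = J$, matching the characterizations $z_m = \max Z_m$ and $y_m = \max\{p^aq^b \in Z_m : b \leq m_\ell\}$ from Theorem~\ref{prop:y_m}. The key structural fact I would first record is that $(\zeta(b_i))_{i}$ is by construction a strictly increasing sequence, and more importantly that it enumerates the \emph{left-to-right maxima} of the finite sequence $(\zeta(0), \zeta(1), \dots, \zeta(\lfloor\log_q m\rfloor))$: indeed $b_{i+1}$ is defined as the least $b > b_i$ with $\zeta(b) > \zeta(b_i)$, so every index $b$ with $b_i < b < b_{i+1}$ satisfies $\zeta(b) \leq \zeta(b_i) < \zeta(b_{i+1})$. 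Consequently, for any $b$ in the range $[0, b_{i+1})$ we have $\zeta(b) \leq \zeta(b_i)$, and in particular $\zeta(b_i) = \max\{\zeta(b) : b \leq b_i\} \geq \zeta(b)$ whenever $b \leq b_i$; while if $b > b_i$ there is a later term of the sequence dominating it.

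For the statement $\zeta(b_I) = z_m$: by Lemma~\ref{lemm:Z_m} the set $Z_m$ is exactly $\{\zeta(b) : 0 \leq b \leq \lfloor\log_q m\rfloor\}$, and $z_m = \max Z_m$. Since $b_I$ is the largest $b_i$ not exceeding $\log_q m$, every index $b \in [0, \lfloor\log_q m\rfloor]$ satisfies $b \leq b_I$ (because if $b > b_I$ then, $b_{I+1}$ being undefined or exceeding $\log_q m$, the defining minimality of the $b_i$ would be violated — more carefully, $b$ lies in $[b_I, b_{I+1})$ where $b_{I+1} > \lfloor \log_q m\rfloor$, so $\zeta(b) \leq \zeta(b_I)$). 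Hence $\zeta(b_I) \geq \zeta(b)$ for all such $b$, i.e. $\zeta(b_I) = \max Z_m = z_m$. The argument for $\zeta(b_J) = y_m$ is structurally identical, replacing the bound $\log_q m$ by $m_\ell$ and invoking the characterization $y_m = \max\{p^aq^b \in Z_m : b \leq m_\ell\}$ from Theorem~\ref{prop:y_m}: one checks that $\{p^aq^b \in Z_m : b \leq m_\ell\} = \{\zeta(b) : 0 \leq b \leq m_\ell\}$, notes that $J$ picks out the largest $b_i \leq m_\ell$ so every $b \leq m_\ell$ lies in $[0, b_{J+1})$ and is thus dominated by $\zeta(b_J)$, and concludes $\zeta(b_J) = y_m$.

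The one point requiring a little care — and the likely main obstacle — is justifying cleanly that \emph{every} integer $b$ in $[0, \lfloor\log_q m\rfloor]$ (resp. $[0, m_\ell]$) really does satisfy $b \leq b_{I+1}-1$, equivalently that the left-to-right maxima argument covers the whole interval rather than stopping short; this amounts to observing that $b_I$ is by definition the last index with $b_i \leq \log_q m$, so either $b_{I+1}$ is undefined (the maxima run out, impossible here since $\zeta$ is a finite sequence with a genuine maximum that must be a left-to-right maximum) or $b_{I+1} > \lfloor\log_q m\rfloor$, and in both cases the half-open block $[b_I, b_{I+1})$ contains all remaining indices up to $\lfloor\log_q m\rfloor$. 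I would spell this out in one or two sentences, then the rest is immediate. No deep input is needed beyond Lemma~\ref{lemm:Z_m} and Theorem~\ref{prop:y_m}; this lemma is essentially a bookkeeping statement tying the algorithmic sequence $(b_i)$ to the already-established set-theoretic descriptions of $z_m$ and $y_m$.
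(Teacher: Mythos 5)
Your argument is correct and is essentially the paper's own proof: both rest on the observation that $(b_i)$ enumerates the left-to-right maxima of $(\zeta(b))_b$, so that $\zeta(b_I)$ (resp.\ $\zeta(b_J)$) dominates $\zeta(b)$ for every $b\leq\lfloor\log_q m\rfloor$ (resp.\ $b\leq m_\ell$), combined with Lemma~\ref{lemm:Z_m} and the characterization of $y_m$ in Theorem~\ref{prop:y_m}; the paper merely phrases the maximality step as a one-line contradiction ($\zeta(b^*)>\zeta(b_I)$ would force $b^*\geq b_{I+1}$, contradicting the definition of $I$). Your parenthetical that the ``$b_{I+1}$ undefined'' case is impossible is not quite right (it occurs when $m\in E$), but as you note the conclusion holds in that case too, so nothing is lost.
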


\begin{proof}
  Let $z_m=\zeta(b^*)$.  Since $\zeta(b_I)\in Z_m$, we have $\zeta(b^*)\geq
  \zeta(b_{I})$.  Suppose $\zeta(b^*)> \zeta(b_{I})$, then $b^*\geq b_{I+1}$,
  which contradicts the definition of $I$.  Thus $\zeta(b^*)= \zeta(b_{I})$,
  hence $b^*= b_{I}$.  Finally, it follows from Theorem~\ref{prop:y_m} that
  $\zeta(b_J)=y_m$.
\end{proof}
In our running example (see Figure~\ref{fig:Z_750}) it can be read directly
  from Table~\ref{tab:750} that sequence $(b_i)$ starts with $(0,2,4,6)$. We
  thus have $I=3$ and, since $m_\ell=4$ (to be read on Figure~\ref{fig:Z_750}),
  $J=2$.

In order to compute successive terms of $(b_i)_{i\in\n}$, we next state a
modified version of the result in~\cite{BerImb09:dmtcs}, which provides a simple
bound on the number of steps required to get $z_m$ from $(b_i)_{i\in\n}$ without
any particular assumption on the partial quotients of $\rho$.  

For each principal convergent $h_s/k_s$ of $\rho$, let $\eps_s=|k_s\rho-h_s|$.
We known that $(\eps_s)_{s\in\n}$ is strictly decreasing and
  converges towards 0. Besides, we also know that the convergents of even
  index approach $\rho$ from below (see~\eqref{convergents}) , whereas those
  of odd index approach $\rho$ from above. Hence $\eps_{2s} = k_{2s}\rho -
  h_{2s}$, while $\eps_{2s+1} = -k_{2s+1}\rho + h_{2s+1}$.  Thus we have
  $\{k_{2s}\rho\}=\eps_{2s}$ and, for all $0 \leq t \leq a_{2s+2}$,
\begin{equation}
  \label{eq:eps}
  \{k_{2s,t}\rho\} = \eps_{2s} - t \eps_{2s+1} =
  \eps_{2s+2}+(a_{2s+2}-t)\eps_{2s+1}
\end{equation}

Our main theorem regarding the computation of $y_m$ and $G(m)$ can now be
  established.

\begin{theo}
  \label{th:seqd}
  For all $i\geq 0$, let $d_{i+1}=b_{i+1}-b_{i}$. The sequence $(d_{i})_{i\geq
    1}$ is non-decreasing ; its terms all belong to $(K_n)_{n\in\n}$ and satisfy
  the following properties:
  \begin{itemize}
  \item[(i)] For each $s\in\n$, there exists at most one value of $t$ in
    $(0,a_{2s+2})$ such that $k_{2s,t}$ belongs to $(d_{i})_{i\geq 1}$. If
    $d_{i+1}=k_{2s,t}$ with $0<t<a_{2s+2}$, then $t=t_s$ is given by
    \begin{equation}
      \label{ts}
      t_s = \left\lceil\frac{\eps_{2s}-\{\log_p m-b_{i}\rho\}}{\eps_{2s+1}}\right\rceil
    \end{equation}   
    
  \item[(ii)] If $d_{i+1} = k_{2s}$ with either $i=0$ or $d_{i+1}>d_{i}$, then
    $k_{2s}$ occurs $n_s$ consecutive times in $(d_{i})_{i\geq 1}$, where
    \begin{equation}
      \label{ns}
      n_s = \left\lfloor \frac {\{\log_p m-b_{i}\rho\}}{\eps_{2s}} \right\rfloor
    \end{equation}
    Moreover, if either $s=0$ or $d_{i}=k_{2s-2,t}$ with $0<t<a_{2s}$, then
    $n_s\leq a_{2s+1}$, else $n_s\leq 1+a_{2s+1}$.
  \end{itemize}
\end{theo}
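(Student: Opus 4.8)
The plan is to track the sequence $(b_i)$ directly through the lens of the ``best approximation from below'' machinery already set up in the excerpt. The key observation is that $\zeta(b)>\zeta(b_i)$ is equivalent, via Lemma~\ref{lemm:Z_m} and the definition of $\zeta$, to a condition on fractional parts: writing $\theta_i = \{\log_p m - b_i\rho\}$, the jump from $b_i$ to $b_{i+1}$ occurs at the smallest $b>b_i$ for which $\{(b-b_i)\rho\} < \theta_i$, i.e. $d_{i+1} = \min\{d>0 : \{d\rho\} < \theta_i\}$. This is exactly the structure governing the $\beta$ function and the $(K_n)$ sequence: by the second consequence of~\eqref{convergents} recalled just before Theorem~\ref{th:beta}, any such minimizing $d$ must be a term of $(K_n)_{n\in\n}$. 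Moreover $\theta_{i+1} = \{\theta_i - d_{i+1}\rho + (\text{integer})\}$, and since $\{d_{i+1}\rho\}<\theta_i$ we get $\theta_{i+1} = \theta_i - \{d_{i+1}\rho\}$ — so the $\theta_i$ form a strictly decreasing sequence in $(0,1)$ obtained by repeatedly subtracting the ``next available'' value $\{d\rho\}$. This is precisely the greedy/Euclidean picture underlying the Ostrowski-type expansion, which is why the $d_i$ are non-decreasing and why each $k_{2s,t}$ is used at most once while each $k_{2s}$ can be repeated.

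\textbf{Carrying this out.} First I would prove $d_{i+1} = \min\{d>0:\{d\rho\}<\theta_i\}$ and deduce $d_{i+1}\in(K_n)$ from the cited consequence of~\eqref{convergents}; also record $\theta_{i+1}=\theta_i-\{d_{i+1}\rho\}$, so $(\theta_i)$ is strictly decreasing. Next, monotonicity of $(d_i)_{i\geq1}$: since $\theta_{i+1}<\theta_i$, the minimizing $d$ at stage $i+1$ cannot be smaller than at stage $i$ — any $d$ with $\{d\rho\}<\theta_{i+1}$ also satisfies $\{d\rho\}<\theta_i$, hence was already available, hence $d_{i+1}$ (being minimal for the tighter constraint, but after subtracting) is $\geq d_i$; one must argue carefully that the value $d_i$ itself is no longer minimal because $\{d_i\rho\}$ was subtracted, forcing the next choice to be at least as large. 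Then, for the fine structure, I would use~\eqref{cvrho}/\eqref{eq:eps}: the available values $\{d\rho\}$ for $d\in(K_n)$ are, in increasing order of $d$, the numbers $\eps_{2s}-t\eps_{2s+1}$ for $k_{2s,t}$, which decrease as $d$ increases. So the greedy subtraction from $\theta_i$ first exhausts the largest usable $k_{2s}$ (value $\eps_{2s}$ is the largest ``round number''), repeating it exactly $\lfloor\theta_i/\eps_{2s}\rfloor$ times — this gives formula~\eqref{ns} for $n_s$. Once the residue drops below $\eps_{2s}$, one switches to a mediant $k_{2s,t}$ with the appropriate $t$: the smallest $t$ making $\{k_{2s,t}\rho\}=\eps_{2s}-t\eps_{2s+1}$ fit under the current residue, which rounds up to~\eqref{ts}; and since after using $k_{2s,t}$ the residue $\eps_{2s+2}+(a_{2s+2}-t)\eps_{2s+1}$ is already below $\eps_{2s,t}=\eps_{2s}-t\eps_{2s+1}$, that mediant is never reused — giving the ``at most one $t$'' claim in~(i).

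\textbf{For the bounds on $n_s$ in~(ii)}, the point is that the residue $\theta_i$ when we first reach $d_{i+1}=k_{2s}$ lies just below the previous available value. If we arrived via a mediant $k_{2s-2,t}$ with $0<t<a_{2s}$ or from the start ($s=0$), then $\theta_i < \{k_{2s-2,t-1}\rho\} = \eps_{2s-2}-(t-1)\eps_{2s+1}$-type quantity, and using the identity $\eps_{2s-2} = a_{2s}\eps_{2s-1}+\eps_{2s}$ together with $\eps_{2s-1}=a_{2s+1}\eps_{2s}+\eps_{2s+1}$ one bounds $\theta_i/\eps_{2s}$ by $a_{2s+1}$; if instead the last step before reaching $k_{2s}$ was a repetition of $k_{2s-2}$ itself, the residue can be one $\eps_{2s-2}$ larger, yielding the bound $1+a_{2s+1}$. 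I expect this last part — pinning down exactly which bound applies depending on the previous term, via the three-term recurrences among the $\eps$'s — to be the most delicate bookkeeping, but it is a finite case analysis with no real obstruction once the greedy-subtraction description of $(\theta_i)$ is in hand. The genuinely new idea, and the crux, is the translation $\zeta(b)>\zeta(b_i) \iff \{(b-b_i)\rho\}<\theta_i$; everything after that is the standard arithmetic of continued fractions applied to a greedy remainder sequence.
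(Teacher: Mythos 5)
Your proposal takes essentially the same route as the paper's proof: the same translation of $\zeta(b)>\zeta(b_i)$ into a fractional-part condition giving $d_{i+1}=\min\{d\in\n^*:\{d\rho\}\leq r_i\}$ with $r_i=\{\log_p m-b_i\rho\}$, the same greedy remainder recursion $r_{i+1}=r_i-\{d_{i+1}\rho\}$ combined with the second consequence of~\eqref{convergents} to place each $d_{i+1}$ in $(K_n)_{n\in\n}$ and get monotonicity, and the same use of~\eqref{eq:eps} and the continued-fraction recurrences to derive~\eqref{ts}, \eqref{ns} and the case distinction for the bounds on $n_s$. Only note that the equivalence should be $\zeta(b)>\zeta(b_i)\iff\{(b-b_i)\rho\}\leq r_i$ (weak, not strict, inequality, since equality can occur when $m\in E$), and that after a mediant step the new remainder is $r_i-\{k_{2s,t_s}\rho\}<\eps_{2s+1}$ (not the quantity you name), which is what rules out every mediant of level $2s$ thereafter; both are harmless adjustments to your sketch.
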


\begin{proof} 
  Let $r_i=\{\log_p m-b_{i}\rho\}$ for short.  For all integers $b$ such that
  $b_i < b \leq \log_q m$, we have \footnote{Note that $0\leq y\leq x$ implies
    $\lfloor x-y\rfloor +y -\lfloor x\rfloor = \lfloor \{x\}-\{y\}\rfloor+\{y\}$, 
    and expand~\eqref{diff-zeta} as $\log_p m - b\rho = \log_p m -
      (b-b_i)\rho - b_i\rho$.}
  \begin{align}
    \log_p\zeta(b)-\log_p\zeta(b_i) 
    = & \lfloor\log_p m-b\rho\rfloor +b\rho - \lfloor\log_p m-b_i\rho\rfloor
    -b_i\rho \label{diff-zeta}\\
    = & \lfloor r_i-\{(b-b_i)\rho\}\rfloor+\{(b-b_i)\rho\}\notag
  \end{align}
  Thus $\zeta(b)>\zeta(b_i)$ if, and only if, $\{(b-b_i)\rho\}\leq r_i$.  For
    $b = b_{i+1}$, this reads $\zeta(b_{i+1}) > \zeta(b_i) \iff
    \{d_{i+1}\rho\} \leq r_i$. Therefore, any positive integer $c < d_{i+1}$
    satisfies $\{c\rho\}>r_i$, hence 
  \begin{equation}
    \label{di}
    d_{i+1} = \min \{ d\in\n^* : \{d\rho\}\leq r_i\}
  \end{equation} 
  Using the second consequence of~\eqref{convergents}, $d_{i+1}$
  is thus a term of $(K_n)_{n\in\n}$.  Similarly, we next get $\{d_{i+2}\rho\}\leq
  r_{i+1}$.  Since $r_i \geq \{d_{i+1}\rho\}$ from~\eqref{di}, observe that
    $r_{i+1} = \{\log_p m - b_i\rho - d_{i+1}\rho \} = r_i - \{d_{i+1}\rho\} <
    r_i$, so that $d_{i+2}\geq d_{i+1}$. Thusly the first claims
    regarding the terms of $(d_{i+1})_{i\in\n}$ are proved.
  \medskip

  Let us now prove the properties in (i).  Assume $d_{i+1}=k_{2s,t}$.
  Since $\{k_{2s,t}\rho\}= \eps_{2s}-t\eps_{2s+1}$, we get using~\eqref{di}
  again
  $$
  \eps_{2s}-t\eps_{2s+1}\leq r_i < \eps_{2s}-(t-1)\eps_{2s+1},  
  $$
  thus $t \geq (\eps_{2s} - r_i)/\eps_{2s+1} > t-1$, hence~\eqref{ts}.  It
  follows that $r_{i+1} = r_i-\{k_{2s,t}\rho\} < \eps_{2s+1}$, and since
  the minimum value of $\{k_{2s,t'}\rho\}$, reached for $t'=a_{2s+2}-1$, is
  $\eps_{2s+2}+\eps_{2s+1}$, we must have $d_{i+2}>k_{2s,a_{2s+2}-1}$, thus
  $d_{i+2}\geq k_{2s+2}$.
  \medskip

  Turning to (ii), assume that $d_{i+1}=k_{2s}$. Hence $r_i\geq
  \eps_{2s}$, so that $n_s$ given in~\eqref{ns} is positive.  If $n_s>1$ we get
  $r_{i+1} = r_i-\eps_{2s}\geq \eps_{2s}$, thus $d_{i+2}=k_{2s}$.  By iterating,
  it follows that $r_{i+j-1}\geq\eps_{2s}$ and $d_{i+j}=k_{2s}$ for each $j\leq
  n_s$, and that $r_{i+n_s}<\eps_{2s}$.  Thus $d_{i+n_s+1}>k_{2s}$.

  Finally assume without loss of generality that either $i=0$ or
  $d_{i}<d_{i+1}$.  Thus $s=0$ implies $i=0$.  Since $r_0=\{\log_p m\}<1$ and
  $\eps_0=\{\rho\}$, $n_0 \leq \lfloor 1/\{\rho\} \rfloor = a_1$.  If $s>0$,
  since $d_{i+1}> k_{2s-2,a_{2s}-1}$ we get, using~\eqref{eq:eps}
  $$
  r_i<\eps_{2s}+\eps_{2s-1}=(a_{2s+1}+1)\eps_{2s}+\eps_{2s+1}, 
  $$
  thus $n_s\leq a_{2s+1}+1$.  Finally, when $d_{i} = k_{2s-2,t}$ with
  $0<t<a_{2s}$,~\eqref{ts} shows that $r_i < \eps_{2s-1}$, which is tighter and
  yields $n_s \leq a_{2s+1}$ in the same way.
\end{proof}

We now describe how Theorem~\ref{th:seqd} can be turned into an algorithm
  that computes sequence $(b_{i})_{i\in\n}$.  As seen in Lemma~\ref{lemm:basis}
  the same algorithm can be used to compute either $z_m$ or $y_m$. For that
  purpose, we use an additional input parameter, denoted $B_m$, standing for
  $\left\lfloor \log_q m \right\rfloor$ or $m_\ell$ respectively. The
algorithm works as follows. Starting from values $s = 0$, $i_0=0$, $b_0=0$,
$r_0=\{\log_p m\}$, iterate:
\begin{enumerate}
    \setlength{\itemsep}{2ex}
\item \textbf{if} $(r_{2s}\geq\eps_{2s})$ \textbf{then}\\[-1ex]
  \begin{enumerate} 
    \setlength{\itemsep}{2ex}
  \item $n_s = \left\lfloor r_{2s}/\eps_{2s} \right\rfloor$; \quad 
    $r_{2s+1} = r_{2s} - n_s\eps_{2s}$;  \quad
    $i_{2s+1} = i_{2s} + n_s$;
  \item \textbf{for} $i_{2s}<i\leq i_{2s+1}$ \textbf{do} \quad $d_i = k_{2s}$;
    \quad $b_i=b_{i-1}+k_{2s}$;
  \item \textbf{if} $(b_{i_{2s+1}}>B_m)$ \textbf{then}
    \textbf{return} $b_{i_{2s}} + \left\lfloor (B_m - b_{i_{2s}})/k_{2s}
    \right\rfloor k_{2s}$;\\[-1ex]
  \end{enumerate}
  \textbf{else} $r_{2s+1} = r_{2s}$; \quad $i_{2s+1}=i_{2s}$;
\item \textbf{if} $(r_{2s+1} \geq \eps_{2s} - \eps_{2s+1})$ \textbf{then}\\[-1ex]
  \begin{enumerate} 
    \setlength{\itemsep}{2ex}
  \item $t_s = \left\lceil (\eps_{2s} - r_{2s+1})/\eps_{2s+1} \right\rceil$;
  \item
    $r_{2s+2} = r_{2s+1} - \eps_{2s} + t_s\eps_{2s+1}$; \quad
    $i_{2s+2} = i_{2s+1}+1$;
  \item $d_{i_{2s+2}} = k_{2s,t_s}$; \quad $b_{i_{2s+2}} = b_{i_{2s+1}} + k_{2s,t_s}$;
  \item \textbf{if} $(b_{i_{2s+2}} > B_m)$ \textbf{then return} $b_{i_{2s+1}}$\\[-1ex]
  \end{enumerate}
  \textbf{else} $r_{2s+2} = r_{2s+1}$; \quad $i_{2s+2}=i_{2s+1}$;
\end{enumerate}

\begin{coro}
The above algorithm requires at most $2+\lfloor\log_2\log_q m\rfloor$ iterations.
\end{coro}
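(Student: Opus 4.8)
The plan is to follow the running value of the variable $b$ in the algorithm and to show that, past an initial transient that the additive constant $2$ absorbs, it is (at least) doubled at each pass of the loop; since $b$ is never allowed to exceed $B_m$, and $B_m\le\lfloor\log_q m\rfloor\le\log_q m$ in both uses of the algorithm (Lemma~\ref{lemm:basis}), the loop must stop after at most $2+\lfloor\log_2\log_q m\rfloor$ passes.

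The first ingredient is a lower bound on the even convergent denominators. From the recurrence $k_{s+1}=a_{s+1}k_s+k_{s-1}$ with $a_{s+1}\ge 1$, and since $k_0=1\le k_1$ makes $(k_s)_{s\ge 0}$ non-decreasing, we get $k_{s+2}\ge k_{s+1}+k_s\ge 2k_s$ for every $s\ge 0$, hence $k_{2s}\ge 2^{s}$ for all $s\in\n$. The second ingredient is Theorem~\ref{th:seqd}: during its $s$-th pass the loop only appends to $(b_i)_{i\in\n}$ increments equal to $k_{2s}$ (in step~1) or to a single mediant $k_{2s,t}=k_{2s}+tk_{2s+1}$ with $0<t<a_{2s+2}$ (in step~2), so each increment committed during the $s$-th pass is at least $k_{2s}\ge 2^{s}$; moreover the same theorem shows that once the $s$-th pass is over the next admissible increment is at least $k_{2s+2}$, so the passes run through the consecutive blocks $k_{2s},k_{2s,1},\dots,k_{2s,a_{2s+2}-1}$ of $(K_n)_{n\in\n}$ one after the other, and any pass that appends something at all raises $b$ by at least $2^{s}$.

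Combining the two: assume the algorithm returns during its $N$-th pass, that is, at $s=N-1$. Since it did not return at $s=N-2$, the value of $b$ reached at the end of that pass satisfies $b\le B_m$; on the other hand, as soon as $N-2$ is past the initial transient that pass has committed at least one increment $\ge k_{2(N-2)}\ge 2^{N-2}$ to $b$ (the returning pass only adds further increments), whence $2^{N-2}\le b\le B_m\le\log_q m$. Since $N$ is an integer this yields $N\le 2+\lfloor\log_2\log_q m\rfloor$. The early exit in step~1(c) only reinforces this, as it is triggered precisely when one pass would already make $b$ exceed $B_m$.

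The step I expect to be the main obstacle is controlling the initial transient, i.e. the early passes in which both tests (step~1 and step~2) fail so that nothing is appended to $(b_i)$. One must check that such empty passes do not break the estimate --- concretely, that at the first non-empty pass the value of $b$ has already caught up to $2^{s}$ --- by tracking the residual $r_{2s}=\{\log_p m-b_{i_{2s}}\rho\}$, using that an empty pass at index $s$ forces $r_{2s}<\eps_{2s}-\eps_{2s+1}$, and comparing the decay of $(\eps_s)_{s\in\n}$ against the divisibility constraint $B_m\le\lfloor\log_q m\rfloor$. That bookkeeping, together with the two paragraphs above, gives the claimed bound.
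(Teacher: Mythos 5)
Your growth estimate is exactly the paper's ($k_{2s}\ge 2^s$, compared against $B_m\le\log_q m$), but your pass-by-pass version hinges on a claim you explicitly defer and never establish: that the pass preceding the returning one actually commits an increment. This is not a bookkeeping detail about an ``initial transient''. A pass $s$ is empty precisely when the current residual $r$ satisfies $r<\eps_{2s}$ and fails the mediant test, and nothing in Theorem~\ref{th:seqd} confines this to the beginning of the run: it can happen \emph{after} increments have been committed, in particular immediately after the increment that brings $b$ to its final value $b_I$ --- that is, just before termination, which is exactly where it threatens your inequality $2^{N-2}\le b$ (your pass $N-2$ may then commit nothing, and the last non-empty pass may lie far below $N-2$). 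Concretely, when $m$ sits just above an element of $E$, the residual left after the last useful increment is of order $1/m$, which is below $\eps_{2s}$ for many subsequent values of $s$; ruling out that the loop idles through such empty passes before the return fires is the crux of the statement, not a step the additive constant $2$ can absorb.

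The paper's own proof is anchored differently: writing $z_m=\zeta(b_I)$ (resp.\ $y_m$) and $d_I=k_{2S,t}$ with $t=0$ or $t=t_S$, it deduces $2^S\le k_{2S}\le d_I\le b_I\le\log_q m$, hence $S\le\log_2\log_q m$, and then counts at most one further iteration, namely the one in which $n_{S+1}$ is computed and the stopping test fires in the case $t=t_S$. In other words, the paper asserts precisely the point you leave open: that the return is triggered in pass $S$ or $S+1$, i.e.\ that no empty passes intervene between the last useful commit and the detection of the overshoot. To complete your argument you would have to prove this (by controlling the residual $r_I$ left after pass $S$ against $\eps_{2S+2}$), or else strengthen the stopping rule (e.g.\ exit as soon as $k_{2s}>B_m-b$, which happens once $2^s>\log_q m$); as written, your proposal supplies the right exponential lower bound on the block sizes but leaves the decisive termination step unproved.
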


\begin{proof}
  Let $z_m=\zeta(b_I)$ and let $d_I=k_{2S,t}$, with $S\geq 0$ and either $t=0$
  or $t=t_S$.  Then $k_{2S}\leq b_I\leq \log_q m$.  But relations $k_{i+3}=
  (a_{i+2}a_{i+1}+1)k_{i+1}+k_i$ and $k_0=1$ imply that $k_{2i}\geq 2^i$, with
  equality only if $i\leq 1$.  Thus $S\leq \log_2\log_q m$.  Finally, checking
  the stopping condition requires $n_{S+1}$ be computed in the case $t=t_S$.
\end{proof} 

Observe that the precision required is about $\log_2\log_q m$ bits for the
floating point calculations with fractional parts. Alternatively, the
computation may be carried-out with integers, by approximating $\rho$ by the
convergent $H/K$, where $K$ is the greatest element in $(K_n)_{n\in\n}$ not
exceeding $\log_q m$, and by performing the operations modulo $K$.

The above algorithm requires $m_\ell$ to be known in order to output $y_m$.  If
we know the largest $n$ such that $K_n+\lfloor\alpha(K_n)\rfloor/\rho\leq\log_p
m$, we simply have $m_\ell=\lfloor\log_p
m-\lfloor\alpha(K_n)\rfloor/\rho\rfloor$.  Indeed,
$\ell_b=\ell_{K_n}=\lfloor\alpha(K_n)\rfloor$ for all $b\in[K_n, K_{n+1})$.  In
order to compute this $K_n$, it suffices to

1: compute the largest $s$ such that $k_{2s}+\lfloor\alpha(k_{2s})\rfloor/\rho\leq\log_p m$, 

2: compute the largest $t$ such that $k_{2s,t}+\lfloor\alpha(k_{2s,t})\rfloor/\rho\leq\log_p m$. 

\noindent Task 1 requires at most $2+\log_2\log_q m$ steps, each step
essentially consisting in computing next values of $k_{2s}$ and
$\alpha(k_{2s})$.  Task 2 may be done by using a binary search of $t$ in
$[0,a_{2s+2})$, which requires $\log_2 a_{2s+2}$ similar steps.  Whereas we are
sure that $a_{2s}<\log_q m$ because $a_{2s}<k_{2s}$, it may happen that
$a_{2s+1}$ or $a_{2s+2}$ exceed $\log_q m$.  In the first case we conclude that
$t=0$, since $\log_q m<k_{2s,1}=k_{2s}+k_{2s+1}$.  In the second case, we may
simply use a binary search of $t$ in $[0,\lfloor (\log_q
m-k_{2s})/k_{2s+1}\rfloor]$ because $k_{2s,t}\leq \log_q m$ must hold.  We have
thus established:

\begin{prop}
  Computing $m_\ell$ can be done by computing $K+\lfloor\alpha(K)\rfloor/\rho$
  for at most $2+2\lfloor\log_2\log_q m\rfloor$ values of $K$ in the sequence
  $(K_n)$.
\end{prop}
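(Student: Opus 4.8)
The plan is to turn the construction described just above into a clean proof with an explicit step count. First I would record the structural fact on which everything rests: by Corollary~\ref{coro:ellb} the sequence $\ell$ is constant on each block $[K_n,K_{n+1})$, equal to $\lfloor\alpha(K_n)\rfloor$, and by the monotonicity observed in the proof of Theorem~\ref{th:beta} the sequence $(\alpha(K_n))_{n}$ is increasing (because $(K_n)$ increases while $(\{K_n\rho\})$ decreases). Since $\ell$ is non-decreasing (Theorem~\ref{th:ell}), the map $b\mapsto b+\ell_b/\rho$ is strictly increasing, so $m_\ell=\max\{b:p^{\ell_b}q^b\le m\}=\max\{b:b+\ell_b/\rho\le\log_p m\}$ is well defined and, letting $n^\ast$ be the largest index with $K_{n^\ast}+\lfloor\alpha(K_{n^\ast})\rfloor/\rho\le\log_p m$, it lies in the block $[K_{n^\ast},K_{n^\ast+1})$; there $\ell$ is constant, whence $m_\ell=\lfloor\log_p m-\lfloor\alpha(K_{n^\ast})\rfloor/\rho\rfloor$. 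Thus computing $m_\ell$ reduces to locating $n^\ast$, i.e.\ to scanning the increasing sequence $(K_n)$ and testing $K+\lfloor\alpha(K)\rfloor/\rho\le\log_p m$ on the candidates $K$.

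I would then bound the number of such tests by splitting the scan into the two tasks already isolated. Task~1 walks through the principal even-index denominators $k_0,k_2,k_4,\dots$ and stops at the first $s$ for which $k_{2s}+\lfloor\alpha(k_{2s})\rfloor/\rho>\log_p m$. The key estimate is that any tested $K$ passing the inequality satisfies $K\le m_\ell$, and that $m_\ell\le\log_q m$ (since $\ell\ge 0$ forces $q^{m_\ell}\le m$); combined with $k_{2s}\ge 2^s$ this shows $s$ never exceeds $\lfloor\log_2\log_q m\rfloor$, so Task~1 examines at most $2+\lfloor\log_2\log_q m\rfloor$ values of $K$. Task~2 then pins down $n^\ast$ inside the mediant range $k_{2s,t}=k_{2s}+t\,k_{2s+1}$, $0\le t<a_{2s+2}$: because $t\mapsto k_{2s,t}$ is increasing and $t\mapsto\{k_{2s,t}\rho\}$ decreasing, the quantity $t\mapsto k_{2s,t}+\lfloor\alpha(k_{2s,t})\rfloor/\rho$ is increasing, so a binary search over $t$ is correct. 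Bounding its length requires $a_{2s+2}$ (or the effective range for $t$) to be $O(\log_q m)$: if $k_{2s+2}\le\log_q m$ this is immediate from $k_{2s+2}=a_{2s+2}k_{2s+1}+k_{2s}$, and otherwise one restricts the search to $t\le\lfloor(\log_q m-k_{2s})/k_{2s+1}\rfloor$, using $k_{2s,t}\le m_\ell\le\log_q m$, with the degenerate case $a_{2s+1}>\log_q m$ forcing $t=0$ outright. This gives at most $\lfloor\log_2\log_q m\rfloor$ further tests, and adding the two contributions yields the stated bound $2+2\lfloor\log_2\log_q m\rfloor$.

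The main obstacle is Task~2: one has to verify that a binary search over the mediants genuinely suffices and that its range stays logarithmic even when the partial quotients $a_{2s+1}$ or $a_{2s+2}$ are large. This is exactly where the interplay between $k_{2s}\le\log_q m$, the identity $k_{2s,1}=k_{2s}+k_{2s+1}$, and the constraint $k_{2s,t}\le m_\ell$ must be exploited. The remainder is bookkeeping: adding $2+\lfloor\log_2\log_q m\rfloor$ (Task~1) to $\lfloor\log_2\log_q m\rfloor$ (Task~2), together with the routine observation that each individual test is a single evaluation of $K\mapsto K+\lfloor\alpha(K)\rfloor/\rho$.
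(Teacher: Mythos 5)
Your proposal is correct and follows essentially the same route as the paper: the same reduction of $m_\ell$ to locating the largest $K_n$ with $K_n+\lfloor\alpha(K_n)\rfloor/\rho$ below the threshold (using that $\ell$ is constant equal to $\lfloor\alpha(K_n)\rfloor$ on $[K_n,K_{n+1})$), the same split into a scan over the $k_{2s}$ bounded via $k_{2s}\geq 2^s$ and $k_{2s}\leq m_\ell\leq\log_q m$, and the same binary search over the mediants with the identical treatment of large partial quotients ($t=0$ when $a_{2s+1}>\log_q m$, otherwise restricting $t\leq\lfloor(\log_q m-k_{2s})/k_{2s+1}\rfloor$). The only additions are explicit justifications (monotonicity of $b\mapsto b+\ell_b/\rho$ and of $t\mapsto k_{2s,t}+\lfloor\alpha(k_{2s,t})\rfloor/\rho$) that the paper leaves implicit, and you inherit the paper's own $\log_p m$ versus $\log_q m$ slip, which is harmless.
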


The final remaining problem is that computing $\alpha(b)$ may be expensive. 
Recall that 
\begin{align}\label{alphabis}
\alpha(b)= \log_p \frac{q^{b}-1}{q^b-p^{\lfloor b\rho\rfloor}}+ \log_p \frac{q-p}{q-1}
\end{align}
We next show that a fitting approximation of $\alpha$ is given by 
\begin{align}\label{alpha+}
\alpha^+(b)=\log_p\frac{q-p}{(q-1)\log p}+\log_p\frac 1{\{b\rho\}}+\frac12\{b\rho\}. 
\end{align}

\begin{prop}
For all $n\in\n$,  we have
$$
0 < \alpha^+(K_n)-\alpha(K_n) < \frac{\log p}{6}\{K_n\rho\}^2+\frac{1}{q^{K_n}-1}
$$ 
\end{prop}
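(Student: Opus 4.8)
The plan is to reduce everything to the single real number $x=\{K_n\rho\}$. Writing $b=K_n$ and using $q=p^{\rho}$, one has the clean identity $q^{b}-p^{\lfloor b\rho\rfloor}=q^{b}\bigl(1-p^{-x}\bigr)$, so the definition of $\alpha$ rewrites as
\[
\alpha(b)=\log_p\bigl(1-q^{-b}\bigr)-\log_p\bigl(1-p^{-x}\bigr)+\log_p\tfrac{q-p}{q-1}.
\]
Subtracting this from $\alpha^{+}(b)$ makes the two sources of error appear separately:
\[
\alpha^{+}(b)-\alpha(b)=-\log_p\bigl(1-q^{-b}\bigr)+E(x),\qquad
E(x):=\log_p\bigl(1-p^{-x}\bigr)+\log_p\tfrac1x-\log_p\log p+\tfrac x2 .
\]
Both summands on the right will turn out to be positive, which already yields the left inequality of the Proposition; the real work is in bounding them from above.

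Next I would simplify $E(x)$. Putting $u=x\log p$ and using $1-e^{-u}=2e^{-u/2}\sinh(u/2)$, the contributions $-u/2$ and $\tfrac x2\log p$ cancel and the logarithm of $u$ telescopes against $\log_p\tfrac1x-\log_p\log p$, leaving the compact form
\[
E(x)=\frac1{\log p}\,\log\frac{\sinh t}{t},\qquad t:=\frac{x\log p}{2}=\frac{\{K_n\rho\}\log p}{2}.
\]
Since $\sinh t>t$ for $t>0$ we get $E(x)>0$, and likewise $-\log_p(1-q^{-b})>0$, which proves $\alpha^{+}(K_n)>\alpha(K_n)$. For the upper estimate I would invoke the Weierstrass product $\dfrac{\sinh t}{t}=\prod_{k\ge1}\bigl(1+\tfrac{t^{2}}{k^{2}\pi^{2}}\bigr)$ together with $\log(1+y)<y$, so that
\[
\log\frac{\sinh t}{t}<\sum_{k\ge1}\frac{t^{2}}{k^{2}\pi^{2}}=\frac{t^{2}}{6},
\qquad\text{hence}\qquad E(\{K_n\rho\})<\frac{\log p}{24}\,\{K_n\rho\}^{2}.
\]

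It then remains to control the $q$-term, using $-\log(1-q^{-b})<\dfrac{q^{-b}}{1-q^{-b}}=\dfrac1{q^{b}-1}$, i.e. $-\log_p(1-q^{-b})<\dfrac1{(q^{b}-1)\log p}$, and to add this to the previous bound with $b=K_n$. The coefficient $\log p/6$ leaves a slack of $\tfrac{\log p}{6}-\tfrac{\log p}{24}=\tfrac{\log p}{8}$ which must cover the $q$-contribution, and this assembly is the \emph{main obstacle}: because the $\sinh$-estimate is essentially sharp (leading coefficient $\log p/24$), one has to verify that this residual room dominates the defect between $-\log_p(1-q^{-b})$ and $1/(q^{b}-1)$. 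When $\log p\ge 1$ this is immediate, but in the remaining range one must exploit that $b=K_n$ ties $\{K_n\rho\}$ to $q^{K_n}$ through the continued-fraction inequalities $\tfrac1{K_n+K_{n+1}}<\{K_n\rho\}<\tfrac1{K_{n+1}}$ (together with the resulting comparison between $K_{n+1}$ and $q^{K_n}$). Everything preceding that final combination is routine once the identity $E(x)=\tfrac1{\log p}\log\tfrac{\sinh t}{t}$ is in place.
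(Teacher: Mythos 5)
Your reduction is exactly the paper's: writing $x=\{K_n\rho\}$ and $t=\tfrac12x\log p$, you arrive at the same identity $\alpha^+(K_n)-\alpha(K_n)=\log_p\frac{\sinh t}{t}-\log_p\left(1-q^{-K_n}\right)$, get positivity from $\sinh t>t$, and bound the two terms separately; your Weierstrass-product estimate $\log\frac{\sinh t}{t}<t^2/6$ even improves the paper's cruder bound (which only gives $2t^2/3$, i.e. $\frac{\log p}{6}x^2$ after division by $\log p$, against your $\frac{\log p}{24}x^2$). The only place where your write-up stops short is the final assembly you flag as the ``main obstacle'': absorbing $\frac{1}{(q^{K_n}-1)\log p}$ into $\frac{\log p}{6}x^2+\frac{1}{q^{K_n}-1}$ when $\log p<1$, i.e. when $p=2$.

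You should know that this residual step is not a technicality you missed: it cannot be carried out, because the inequality as stated fails for $p=2$. Take $(p,q)=(2,3)$ and $n=0$, so $K_0=1$ and $x=\log_2(3/2)\approx 0.58496$: then $\alpha(1)=0$ and $\alpha^+(1)\approx 0.5948$, whereas $\frac{\log 2}{6}x^2+\frac{1}{3-1}\approx 0.0395+0.5=0.5395$; the case $n=1$ fails similarly ($0.1707$ versus $0.1283$). The paper's own proof slips at precisely the point you identified: from $-\log(1-y)<y/(1-y)$ it concludes $-\log_p\left(1-q^{-K_n}\right)<\frac{1}{q^{K_n}-1}$, silently discarding the factor $1/\log p$, which is legitimate only for $p\geq 3$. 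So no continued-fraction argument can rescue the stated form when $p=2$; the honest outcome of your computation is the corrected bound $0<\alpha^+(K_n)-\alpha(K_n)<\frac{\log p}{6}\{K_n\rho\}^2+\frac{1}{(q^{K_n}-1)\log p}$ (indeed with $\frac{\log p}{24}$ in place of $\frac{\log p}{6}$, thanks to your sharper $\sinh$ estimate), valid for all $p$, which is what both your argument and the paper's actually establish and which serves the same purpose as an accuracy estimate for $\alpha^+$.
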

\begin{proof}
Let $\delta_n=\alpha^+(K_n)-\alpha(K_n)$ and $u_n=\frac 12\{K_n\rho\}\log p$.
According to \eqref{alphabis} and \eqref{alpha+},  
$$
\delta_n=\log_p\frac{\sinh u_n}{u_n}-\log_p(1-q^{-K_n})
$$
Observe that $\delta_n>0$ because $\sinh u_n> u_n>0$ and $0<q^{-K_n}<1$. 
Furthermore, $-\log(1-x)<x/(1-x)$ for $0<x<1$, thus $-\log_p(1-q^{-K_n})<1/(q^{K_n}-1)$. 
Finally, $(1-e^{-x})/x<1-x/2+x^2/6$ for $0<x$, so that 
$$
\log\frac{\sinh u_n}{u_n}=u_n+\log\frac{1-e^{-2u_n}}{2u_n}<\frac{2u_n^2}3
$$
Hence the claimed inequalities. 
\end{proof}

\section{Concluding remark}
\label{sec:conclusion}

When $m = q^N$ for some positive integer $N$, applying Theorem~\ref{th:seqd} for
the search of $z_m$ provides one with a finite representation $N$ by a finite
sum of terms of $(K_n)_{n\in\n}$.  This representation is similar in spirit to
Ostrovski's number system~\cite{Ostrowski22, Berthe01a}.  For example, take
$\rho=\log_2 3$ whose partial quotients start with $[1,1,1,2,2,3,1,...]$.  In
this case, sequence $(K_n)_{n\in\n}$ starts with $(1,2,7,12,53,...)$ and
sequence $(k_n)_{n\in\n}$ starts with $(1,1,2,5,12,41,53,...)$.  Therefore, we
get a representation of 6 given by $6 = 3K_1 = 3k_2$, whereas
$6 = k_1+k_3$ in Ostrovski's representation.  Studying this novel representation
is beyond the scope of this paper and should be the topic of future work. 

To conclude, the authors wish to thank an anonymous referee for her/his stimulating comments, 
which, in particular, lead them to discover and prove Theorem~\ref{th:seqd}.  



\begin{thebibliography}{10}

\bibitem{AllSha03}
J.-P. Allouche and J.~Shallit, \emph{Automatic sequences}, Cambridge University
  Press, 2003.

\bibitem{Berthe01a}
V.~Berth{\'e}, \emph{Autour du syst{\`e}me de num{\'e}ration d'{O}strowski},
  Bulletin of the Belgian Mathematical Society \textbf{8} (2001), 209--239.

\bibitem{BerImb09:dmtcs}
V.~Berth{\'e} and L.~Imbert, \emph{Diophantine approximation, {O}strowski
  numeration and the double-base number system}, Discrete Mathematics \&
  Theoretical Computer Science \textbf{11} (2009), no.~1, 153--172.

\bibitem{DimImbMis08:mathcomp}
V.~Dimitrov, L.~Imbert, and P.~K. Mishra, \emph{The double-base number system
  and its application to elliptic curve cryptography}, Mathematics of
  Computation \textbf{77} (2008), no.~262, 1075--1104.

\bibitem{DimJulMil99}
V.~Dimitrov, G.~A. Jullien, and W.~C. Miller, \emph{Theory and applications of
  the double-base number system}, {IEEE} Transactions on Computers \textbf{48}
  (1999), no.~10, 1098--1106.

\bibitem{ErdLox79a}
P.~Erd\H{o}s and J.~H. Loxton, \emph{Some problems in partitio numerorum},
  Journal of the Australian Mathematical Society, Series A \textbf{27} (1979),
  no.~3, 319--331.

\bibitem{ImbPhi10:pqary:CDM}
L.~Imbert and F.~Philippe, \emph{Strictly chained $(p,q)$-ary partitions},
  Contributions to Discrete Mathematics \textbf{5} (2010), no.~2, 119--136.

\bibitem{Lang95}
S.~Lang, \emph{An introduction to diophantine approximations}, Springer, 1995.

\bibitem{LauMigNes95}
M.~Laurent, M.~Mignotte, and Y.~Nesterenko, \emph{Formes lin{\'e}aires en deux
  logarithmes et d{\'e}terminants d'interpolation}, Journal of Number Theory
  \textbf{55} (1995), no.~2, 285--321.

\bibitem{Ostrowski22}
A.~M. Ostrowski, \emph{Bemerkungen zur theorie der diophantischen
  approximationen}, Abh. Math. Semin. Hamburg Univ \textbf{1} (1922), 77--98.

\bibitem{Richards81}
I.~Richards, \emph{Continued fractions without tears}, Mathematics Magazine
  \textbf{54} (1981), no.~4, 163--171.

\bibitem{Tijdeman73}
R.~Tijdeman, \emph{On integers with many small prime factors}, Compositio
  Mathematica \textbf{26} (1973), no.~3, 319--330.

\bibitem{Tijdeman74a}
\bysame, \emph{On the maximal distance between integers composed of small
  primes}, Compositio Mathematica \textbf{28} (1974), 159--162.

\end{thebibliography}

\providecommand{\bysame}{\leavevmode\hbox to3em{\hrulefill}\thinspace}
\providecommand{\MR}{\relax\ifhmode\unskip\space\fi MR }
\providecommand{\MRhref}[2]{%
  \href{http://www.ams.org/mathscinet-getitem?mr=#1}{#2}
}
\providecommand{\href}[2]{#2}

\end{document}